\title{On the Lattice Packings and Coverings of the Plane with Convex Quadrilaterals}
\author{Ktrati Sriamorn}
\chardef\bslash=`\\ % p. 424, TeXbook
\newtheorem{thm}{Theorem}[section]
\newtheorem{cor}[thm]{Corollary}
\newtheorem{lem}[thm]{Lemma}
\theoremstyle{definition}
\theoremstyle{remark}
\newtheorem{rem}{Remark}[section]
\newcommand{\eval}[2][\right]{\relax
  \ifx#1\right\relax \left.\fi#2#1\rvert}
\begin{document}
\maketitle
\markboth{ On he Lattice Packings and Coverings with Convex Quadrilaterals}
{On the Lattice Packings and Coverings with Convex Quadrilaterals}
\renewcommand{\sectionmark}[1]{}

\begin{abstract}
It is well known that the lattice packing density and the lattice covering density of a triangle are $\frac{2}{3}$ and $\frac{3}{2}$ respectively \cite{fary}. We also know that the lattices that attain these densities both are unique. Let $\delta_{L}(K)$ and $\vartheta_{L}(K)$ denote the lattice packing density and the lattice covering
density of $K$, respectively. In this paper, I study the lattice packings and coverings for a special class of convex disks, which includes all triangles and convex quadrilaterals. In particular, I determine the densities $\delta_{L}(Q)$ and $\vartheta_{L}(Q)$, where $Q$ is an arbitrary convex quadrilateral. Furthermore, I also obtain all of lattices that attain these densities. Finally, I show that $\delta_{L}(Q)\vartheta_{L}(Q)\geq 1$ and $\frac{1}{\delta_{L}(Q)}+\frac{1}{\vartheta_{L}(Q)}\geq 2$, for each convex quadrilateral $Q$.
\end{abstract}

\section{Introduction and preliminaries}\label{intro}
An $n$-dimensional \emph{convex body} is a compact convex subset of $\mathbb{R}^n$ with an interior point. A 2-dimensional convex body is called a \emph{convex disk}. The $n$-dimension measure of a set $S$ will be denoted by $|S|$. The closure and the interior of $S$ will be denoted by $\overline{S}$ and $Int(S)$, respectively. The cardinality of $S$ is denoted by $card\{S\}$

For any $n$ independent vectors $\mathbf{v}_1,\ldots,\mathbf{v}_n$ in $\mathbb{R}^n$, the \emph{lattice} generated by $\mathbf{v}_1,\ldots,\mathbf{v}_n$ is the set of vectors
$$\{ k_1\mathbf{v}_1 + \cdots + k_n\mathbf{v}_n : k_1,\ldots,k_n \text{~are integers} \},$$
denoted by $\mathcal{L}(\mathbf{v}_1,\ldots,\mathbf{v}_n)$.

A family $\mathcal{F}=\{K_1,K_2,\ldots\}$ of convex bodies is called a \emph{covering} of a domain $D\subseteq \mathbb{R}^n$ provided
$D\subseteq\bigcup_{i}K_i,$
and call $\mathcal{F}$ a \emph{packing} of $D$ if the interiors are mutually disjoint and $\bigcup_{i}K_i\subseteq D$.  A family $\mathcal{F}$ which is both a packing and a covering of $D$ is called a \emph{tiling} of $D$. Any compact set which admits a tiling of $\mathbb{R}^n$ is called a \emph{tile}.

The \emph{upper} and \emph{lower density} of a family $\mathcal{F}=\{K_1,K_2,\ldots\}$ of convex bodies with respect to a bounded domain $D$ are defined as
$$d_{+}(\mathcal{F},D)=\frac{1}{|D|}\sum_{K\in\mathcal{F}, K\cap D\neq\emptyset}|K|,$$
and
$$d_{-}(\mathcal{F},D)=\frac{1}{|D|}\sum_{K\in\mathcal{F}, K\subset D}|K|.$$
We define the \emph{upper} and \emph{lower density} of the family $\mathcal{F}$ by
$$d_{+}(\mathcal{F})=\limsup_{r\rightarrow \infty} d_{+}(\mathcal{F},rB^n),$$
and
$$d_{-}(\mathcal{F})=\liminf_{r\rightarrow \infty} d_{-}(\mathcal{F},rB^n),$$
where $B^n$ denotes the unit ball in $\mathbb{R}^2$, centered at the origin.

The \emph{packing density} $\delta(K)$ of convex body $K$ is defined by the formula
$$\delta(K)=\sup_{\mathcal{F}}d_{+}(\mathcal{F}),$$
the supremum being taken over all packings $\mathcal{F}$ of $\mathbb{R}^n$ with congruent copies of $K$.
The \emph{covering density} $\vartheta(K)$ of convex body $K$ is defined by the formula
$$\vartheta(K)=\inf_{\mathcal{F}}d_{-}(\mathcal{F}),$$
the infimum being taken over all coverings $\mathcal{F}$ of $\mathbb{R}^n$ with congruent copies of $K$.

One may consider arrangements of translated copies of $K$ only, or just lattice arrangements of translates of $K$. In these cases, the corresponding densities assigned to $K$ by analogous definitions are: the \emph{translative} packing and covering density of $K$, denoted by $\delta_{T}(K)$ and $\vartheta_{T}(K)$, and the \emph{lattice} packing and covering density of $K$, denoted by $\delta_{L}(K)$ and $\vartheta_{L}(K)$, respectively.

Let $K$ be an $n$-dimensional convex body. Suppose that $x\in\mathbb{R}^n$ and $X$ is a discrete subset of $\mathbb{R}^n$, we define $K+x=\{y+x : y\in K\}$,
and denote by $K+X$ the family $\{K+x\}_{x\in X}$.

An \emph{optimal packing lattice} of $K$ is a lattice $\Lambda$ which $K+\Lambda$ is a packing of $\mathbb{R}^n$ with density $\delta_{L}(K)$. Denote by $\Delta(K)$ the collection of all optimal packing lattice of $K$. Similarly, An \emph{optimal covering lattice} of $K$ is a lattice $\Lambda$ which $K+\Lambda$ is a covering of $\mathbb{R}^n$ with density $\vartheta_{L}(K)$. Denote by $\Theta(K)$ the collection of all optimal covering lattice of $K$.

Let $\mathcal{K}^n$ denote the collection of all $n$-dimensional convex bodies. Let $K_1+K_2$ denote the \emph{Minkowski sum} of $K_1$ and $K_2$ defined by
$$K_1+K_2 = \{x_1+x_2 :~x_i\in K_i\},$$
let $\|\cdot\|^*$ denote the \emph{Hausdorff metric} on $\mathcal{K}^n$ defined by
$$\|K_1,K_2\|^*=\min \{r:~K_1\subset K_2+rB^n,~K_2\subset K_1+rB^n\},$$
and let $\{\mathcal{K}^n,\|\cdot\|^*\}$ denote the space of $\mathcal{K}^n$ with metric $\|\cdot\|^*$. It is easy to see that, for $\lambda_i\in\mathbb{R}$ and $K_i\in\mathcal{K}^n$,
$$\lambda_1K_1+\lambda_2K_2+\cdots +\lambda_mK_m\in\mathcal{K}^n.$$
In certain sense, the space $\mathcal{K}^n$ has linear structure. Blaschke selection theorem guarantees the local compactness of $\{\mathcal{K}^n,\|\cdot\|^*\}$. It is easy to show that all $\delta(K),\delta_{T}(K),\delta_{L}(K),\vartheta(K),\vartheta_{T}(K)$ and $\vartheta_{L}(K)$ are bounded continuous functions defined on $\{\mathcal{K}^n,\|\cdot\|^*\}$.

Define $\omega : \mathcal{K}^2 \rightarrow \mathbb{R}^2$ by $\omega(K)=(\delta(K),\vartheta(K))$ for every $K\in \mathcal{K}^2$. By continuity of each of the real-valued functions $\delta$ and $\vartheta$, the function $\omega$ is continuous. Let $\Omega= \omega(\mathcal{K}^2)$. Similarly, we can define the sets $\Omega_{T}$ and $\Omega_{L}$ by replacing the function $\omega=(\delta,\vartheta)$ by $\omega_{T}=(\delta_{T},\vartheta_{T})$ and by $\omega_{L}=(\delta_{L},\vartheta_{L})$, respectively.

Since the Minkowski sum of two \emph{centrally symmetric} sets is a centrally symmetric set, the analogous statements hold for the space $\mathcal{C}^n$ of centrally symmetric $n$-dimensional convex bodies, and to the corresponding sets $\Omega^*=\omega(\mathcal{C}^2)$, $\Omega^*_{T}=\omega_{T}(\mathcal{C}^2)$ and $\Omega^*_{L}=\omega_{L}(\mathcal{C}^2)$. It is well known that $\vartheta_T(C)=\vartheta_L(C)$, for all centrally symmetric convex disks $K$ \cite{fejes}. This immediately follows that $\Omega^*_{T}=\Omega^*_{L}$.

The question of describing explicitly the sets $\Omega$, $\Omega_{T}$, $\Omega_{L}$, $\Omega^*$, and $\Omega_{L}^*$ remains open. In 2001 Ismailescu \cite{ismailescu} proved that for each centrally symmetric convex disk $C$,
$$1-\delta_{L}(C)\leq\vartheta_{L}(C)-1\leq\frac{5}{4}\sqrt{1-\delta_{L}(C)}.$$
These inequalities can be expressed as : the set $\Omega_{L}^*=\Omega_{T}^*$ lies between the line $x+y=2$ and the curve $y=1+\frac{5}{4}\sqrt{1-x}$. A recent paper of Ismailescu and Kim \cite{ismailescu kim} showed that $\delta_{L}(C)\vartheta_{L}(C)\geq 1$ for every centrally symmetric convex disk $K$, which is stronger than Ismailescu's inequality $\delta_{L}(C)+\vartheta_{L}(C)\geq 2$ mentioned above. It is still unknown whether these inequalities hold for any (non-symmetric) convex disks. However, I will show later that these inequalities hold for convex quadrilaterals.

We note that if one can prove that the inequality $\frac{1}{\delta_{L}(C)}+\frac{1}{\vartheta_{L}(C)}\leq 2$ holds for every \emph{centrally symmetric} convex disk $C$, this would represent an improvement over the inequality $\delta_{L}(C)\vartheta_{L}(C)\geq 1$. Unfortunately, this still is an open problem. In general case (not necessarily symmetrical), it is obvious that there exist convex disks $K$ such that the inequality $\frac{1}{\delta_{L}(K)}+\frac{1}{\vartheta_{L}(K)}\leq 2$ dose not hold. In fact, $\frac{1}{\delta_{L}(T)}+\frac{1}{\vartheta_{L}(T)}=\frac{3}{2}+\frac{2}{3}> 2$, for triangles $T$. Moreover, we will see later in this paper that $\frac{1}{\delta_{L}(Q)}+\frac{1}{\vartheta_{L}(Q)}\geq 2$, for every convex quadrilaterals $Q$.

Let $f(x)$ be a convex and non-increasing continuous function with $f(0)=1$ and $f(1)\geq 0$. Throughout this paper, we define the convex disk
$$K_f=\{(x,y) : 0\leq x\leq 1~,~0\leq y\leq f(x)\}.$$
Denote by $C_{f}$ the curve $\{(x,f(x)) :~0< x<1\}\cup \{(1,y) :~0<y\leq f(1)\}$.

\begin{figure}[!ht]
  \centering
    \includegraphics[scale=.80]{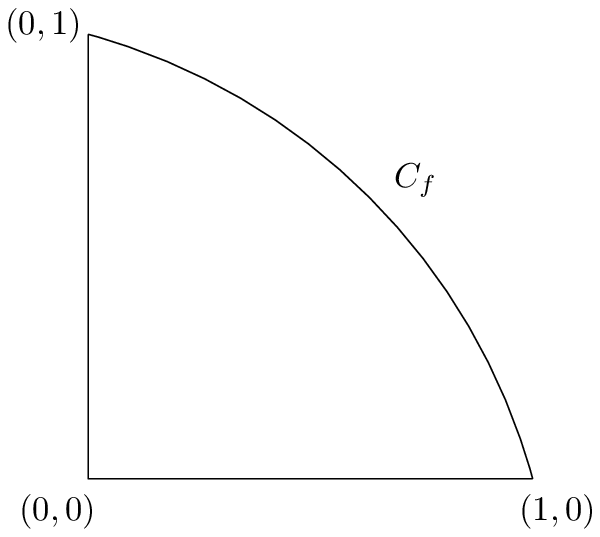}
   \caption{$K_f$}
\end{figure}
A very recent paper of Xue and Kirati \cite{xuefei} proved the following result

\begin{thm}\label{transequallattice}
$\vartheta_T(K_f)=\vartheta_L(K_f)$.
\end{thm}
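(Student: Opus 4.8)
The plan has two stages: first a soft reduction to a finitely parametrised situation, then a geometric argument that \emph{rectifies} an arbitrary translative covering into a lattice one without increasing its density. To begin, observe that a lattice covering is in particular a translative covering and $\vartheta_T$ is an infimum over that larger family, so $\vartheta_T(K_f)\le\vartheta_L(K_f)$ is automatic; the content is the reverse inequality, i.e.\ that no translative covering of $\mathbb{R}^2$ by translates of $K_f$ has lower density strictly below $\vartheta_L(K_f)$. Everything will hinge on the rigid shape of $K_f$: a horizontal bottom edge $[0,1]\times\{0\}$ and a vertical left edge $\{0\}\times[0,1]$ meeting at a right angle, the convex non-increasing arc $C_f$ closing the region on the far side, and, crucially, the fact that $K_f$ is the region under the graph of a function.

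For the reduction I would invoke the standard machinery. By the Blaschke selection theorem and the usual periodization trick — restrict a near-optimal covering to a large square, tile $\mathbb{R}^2$ with periodic copies of it, and repair the boundary defects, of vanishing relative area, with a negligible number of extra copies — one sees that $\vartheta_T(K_f)$ equals the infimum of the densities $m\,|K_f|/|\det\Lambda|$ over all doubly periodic coverings $K_f+(\Lambda+\{x_1,\dots,x_m\})$ of $\mathbb{R}^2$, with $\Lambda$ a lattice and $x_1,\dots,x_m$ in one fundamental domain. It therefore suffices to show that every such periodic covering can be turned into a lattice covering of $K_f$ of density at most $m\,|K_f|/|\det\Lambda|$; equivalently, a periodic covering with $m\ge2$ translates per fundamental domain can never strictly beat all lattice coverings of $K_f$. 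This is exactly where the special form of $K_f$ must be used: a naive Dirichlet-type cell decomposition is far too lossy here — already for a triangle it fails to exclude density close to $1$, whereas $\vartheta_L(T)=\tfrac{3}{2}$ — so one cannot bypass the geometry. I would then proceed by induction on $m$, lowering $m$ by one through a sequence of density-non-increasing modifications until $m=1$ and a lattice covering remains.

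The inductive step is where convexity and monotonicity of $f$ become indispensable. Order the $m$ translates in a fundamental domain by the abscissa of their left edges, pick two that are adjacent in this order, and slide one past the other: against the vertical left edge and the horizontal bottom edge the overlap behaves trivially, but along the arc $C_f$ the uncovered region that opens up as the copy slides varies monotonically — and this monotonicity is precisely what convexity of the non-increasing $f$ buys — so one may push the slide until the moved copy (and hence its whole $\Lambda$-orbit) becomes entirely redundant and can be deleted, leaving $m-1$ translates, or until the configuration has become periodic in a finer sense that lets a whole block of copies be replaced by a sublattice orbit; either way $m$ drops. The main obstacle, and the technical heart of the matter, is to make these sliding-and-merging moves rigorous: one must verify that the gaps which inevitably open along $C_f$ during a slide can always be closed by an admissible translation and never leave an uncovered point, and one must arrange the bookkeeping so that the process terminates. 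Dropping either hypothesis on $f$ destroys the monotonicity of the uncovered region and the whole scheme collapses. As a consistency check, for $f(x)=1-x$ the argument must recover the fact that the translative and lattice covering densities of a triangle coincide, both equal to $\tfrac{3}{2}$.
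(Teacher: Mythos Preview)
The paper does not itself prove this theorem; it is quoted from \cite{xuefei}. But Section~3 exposes the architecture of that proof. One introduces the inscribed ``staircase'' hexagons $S_f(x_1,x_2)\subset K_f$, sets $A_f=\max_{x_1\le x_2}|S_f(x_1,x_2)|$, and shows directly that every translative covering of the plane by $K_f$ has density at least $|K_f|/A_f$ (Theorem~\ref{transequallattice_witharea}); since each $S_f(x_1,x_2)$ tiles under the explicit lattice $\Lambda_f(x_1,x_2)$ of~(\ref{lambdadownlattice}), the matching upper bound $\vartheta_L(K_f)\le|K_f|/A_f$ is immediate, and $\vartheta_T=\vartheta_L$ follows. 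The hard step is thus a \emph{density lower bound} valid for an arbitrary translative covering, obtained by exploiting the right-angle corner of $K_f$ to bound the area any one translate can claim for itself; no covering is ever deformed or rectified.

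Your plan is genuinely different, and its inductive step has a real gap. The periodisation is fine, but then you propose to take a periodic covering with $m\ge2$ orbits, pick two orbits adjacent in left-edge order, and slide one until it becomes redundant. Sliding an orbit in a covering uncovers points, and the only control you offer is that along $C_f$ the uncovered region ``varies monotonically'' in the slide parameter. That monotonicity, even if granted, works against you: the uncovered set starts empty and can only grow, so at no stage of the slide is coverage automatically restored. What you actually need is that at the terminal position the uncovered region lies inside the union of the remaining $m-1$ orbits, and nothing in the sketch supplies this---in a tight covering every translate is essential, and sliding one onto a neighbour simply leaves a hole the size of its former exclusive region. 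The alternative outcome you mention (``the configuration becomes periodic in a finer sense'') is not a separate mechanism but a restatement of the desired conclusion. You flag this step yourself as ``the main obstacle'', and I do not see a way to close it along the lines sketched: convexity of $f$ controls the \emph{shape} of the hole that opens, not whether the surviving orbits fill it. The staircase route succeeds precisely because it never attempts to rearrange a given covering; it proves an area bound that every translative covering, periodic or not, must respect.
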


\section{Main Results}
Let $K_{x,y}$ denote the quadrilateral with vertices $(0,1),(0,0),(1,0)$ and $(x,y)$. Denote by $D$ the set of all points $(x,y)$ that satisfy $0\leq x\leq y \leq 1$ and $x+y\geq 1$. Let $Q$ be an arbitrary convex quadrilateral. One can easily show that there exists $(x,y)\in D$ such that $Q$ and $K_{x,y}$ are affinely equivalent.

\begin{figure}[ht]
\begin{minipage}[b]{0.45\linewidth}
\centering
\includegraphics[width=\textwidth]{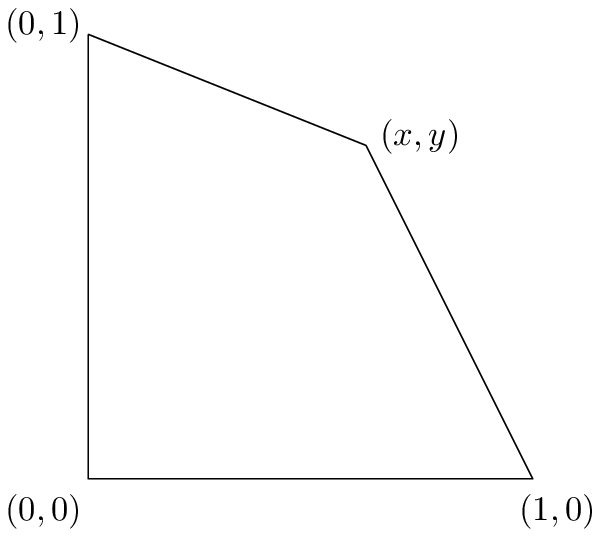}
\caption{$K_{x,y}$}
\end{minipage}
\hspace{0.5cm}
\begin{minipage}[b]{0.45\linewidth}
\centering
\includegraphics[width=\textwidth]{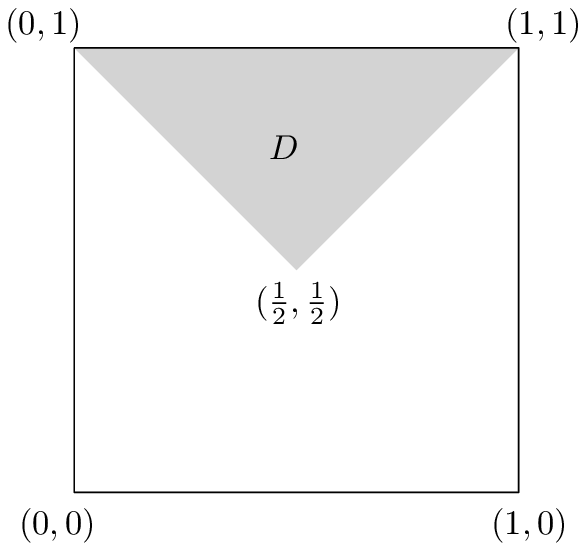}
\caption{$D$}
\end{minipage}
\end{figure}

It is well known that $\delta_{L}$ and $\vartheta_{L}$ are affinely invariant. In order to determine $\delta_{L}(Q)$, $\vartheta_{L}(Q)$, $\Delta(Q)$ and $\Theta(Q)$, we may assume, without loss of generality, that $Q=K_{x,y}$, where $(x,y)\in D$. We define
$$\delta_{L}(x,y)=\delta_{L}(K_{x,y}), ~\vartheta_{L}(x,y)=\vartheta_{L}(K_{x,y}),$$
and
$$\Delta(x,y)=\Delta(K_{x,y}), ~\Theta(x,y)=\Theta(K_{x,y}).$$

The main results are as follows

\begin{thm}\label{main theorem}
Suppose that $(x,y)\in D$, then
$$\delta_{L}(x,y)=\frac{2y(x+y)}{4y+x-1}$$
and
$$
\vartheta_{L}(x,y)=
\begin{cases}
\frac{3(x+y)(1-x)}{2y} & x\leq\frac{1}{3},\\
\frac{2(x+y)}{y(1+3x)} & x\geq\frac{1}{3} \text{~~and~~} y\geq\frac{2}{3},\\
\frac{(x+y)(4(1-x)(1-y)-xy)}{2(x(1-x)+y(1-y)-xy)} & y<\frac{2}{3}.\\
\end{cases}
$$
\end{thm}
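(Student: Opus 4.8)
The plan is to reduce the problem for a general convex quadrilateral $Q=K_{x,y}$ to the study of $K_f$ for a suitably chosen piecewise-linear $f$, and then to compute lattice packing and covering densities of such $K_f$ directly. Since $\delta_L$ and $\vartheta_L$ are affinely invariant, it suffices to work with $K_{x,y}$, $(x,y)\in D$; and one checks that $K_{x,y}$ is affinely equivalent to $K_f$ where $f$ is the concave (here, since $f$ must be convex in the paper's convention, one should instead take the ``lower-left'' normalization) piecewise-linear function with one breakpoint whose two slopes encode $x$ and $y$. Concretely, I would first set up a dictionary: given $(x,y)\in D$, produce $f=f_{x,y}$ with $f(0)=1$, a single corner at some $x_0\in(0,1)$, and $f(1)=c\ge 0$, such that $K_{f_{x,y}}$ is affinely (indeed linearly, up to reflection) equivalent to $K_{x,y}$; record the areas $|K_{x,y}|=\tfrac12(x+y)$ and $|K_{f}|$ so the density formulas can be transported.

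Next I would prove a general packing/covering lemma for $K_f$ with piecewise-linear $f$. For lattice packing: a lattice $\Lambda$ makes $K_f+\Lambda$ a packing iff $\Lambda \cap \operatorname{Int}(K_f - K_f)=\{0\}$, so $\delta_L(K_f)=|K_f|/\det\Lambda_{\min}$ where $\Lambda_{\min}$ is a minimal-determinant lattice admissible for the difference body $K_f-K_f$; by a theorem of the Minkowski–Reinhardt type for convex polygons, the optimal lattice can be taken so that the packing is ``double-lattice'' / the fundamental domain is spanned by edge and diagonal vectors of $K_f$. I would enumerate the finitely many combinatorial types of such critical lattices (a vector joining two parallel supporting points, paired with a ``thinnest'' direction), write $\det\Lambda$ as an explicit function of the slopes of $f$ in each type, and minimize. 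For lattice covering I would invoke \thmref{transequallattice} to replace $\vartheta_L(K_f)$ by $\vartheta_T(K_f)$, then use the standard description: $K_f+\Lambda$ covers iff every translate class of a fundamental cell is hit, equivalently $\Lambda$ is a covering lattice for $K_f$; again the optimum is attained at a critical lattice determined by two or three boundary-contact conditions, giving finitely many candidate cases — which is exactly the source of the three-case split ($x\le\tfrac13$; $x\ge\tfrac13,\,y\ge\tfrac23$; $y<\tfrac23$) in the statement.

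The core of the argument is therefore: (i) show the optimum is attained at one of the explicitly listed critical lattice types (an ``every optimal lattice is of special form'' statement), and (ii) in each type solve the resulting optimization in closed form, then take the minimum over types and check which type wins in which subregion of $D$. Step (i) is where I expect the main obstacle: one must rule out all other combinatorial configurations, and for coverings in particular one must argue that no ``degenerate'' lattice (where a vertex of $K_f$ sits on an edge of a neighboring translate, etc.) does better — this is the kind of case analysis that is routine in principle but delicate to make airtight, and it is presumably handled in the paper by a compactness/continuity argument (the densities are continuous on $\{\mathcal K^n,\|\cdot\|^*\}$, as noted) plus a local-perturbation (Lagrange/KKT) check showing any critical lattice that is not of the listed form can be improved. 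Step (ii) is pure (if tedious) algebra: each case gives a rational function of $(x,y)$, and matching the boundary formulas across $x=\tfrac13$ and $y=\tfrac23$ provides a useful consistency check that the case division is correct. Finally, the inequalities $\delta_L(Q)\vartheta_L(Q)\ge 1$ and $\tfrac1{\delta_L(Q)}+\tfrac1{\vartheta_L(Q)}\ge 2$ follow by substituting the closed forms and verifying the resulting polynomial inequalities on the triangle $D$ — again routine, and a good sanity check on the formulas.
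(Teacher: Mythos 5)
Your overall reduction (pass to $K_{x,y}=K_{\bar f}$ for a piecewise-linear convex $\bar f$ with one breakpoint, then compute the two densities of $K_{\bar f}$ by an explicit finite case analysis) matches the paper's strategy, and your step (ii) is indeed the algebra carried out in Sections 4 and 5. The genuine gap is exactly where you flag it: step (i), the claim that every optimal lattice is of one of finitely many listed ``critical'' types. For the covering half this is not something a compactness-plus-local-perturbation (KKT) argument will deliver on its own, because local criticality does not exclude other combinatorial configurations of boundary contacts, and degenerate configurations are precisely the hard cases. The paper does not attempt this; it imports the structural result of Sriamorn--Xue (Theorem~\ref{transequallattice_witharea} and Lemma~\ref{stairdown}): $\vartheta_T(K_f)=\vartheta_L(K_f)=|K_f|/A_{f}$, where $A_{f}$ is the \emph{maximal area of an inscribed staircase hexagon} $S_{f}(x_1,x_2)\subset K_f$, each such hexagon tiling the plane by an explicit lattice. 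With that in hand, computing $\vartheta_L(x,y)$ reduces to maximizing an explicit quadratic $G(x',y')$ over a rectangle, which is where the three-case split ($x\le\frac13$; $x\ge\frac13,\,y\ge\frac23$; $y<\frac23$) actually comes from. Your proposal cites only \thmref{transequallattice} ($\vartheta_T=\vartheta_L$), which is not enough; without the area formula the core of the covering computation is missing.

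On the packing half, your route through the difference body and its critical determinant is legitimate in principle but is not what the paper does, and it is heavier than necessary: the paper circumscribes a staircase hexagon $S^{f}(x)\supset K_f$ that tiles by the lattice $\mathcal{L}((x-1,1),(x,f(x)))$, giving $\delta_L(K_f)\ge |K_f|/A^{f}$, and gets the matching upper bound $\delta_T(K_f)\le |K_f|/A^{f}$ from the elementary observation that in any translative packing at most one other translate can meet $(U\setminus K_f)+u_i$ where $U=(0,1)^2$ (Theorem~\ref{deltawitharea}). If you pursue the difference-body route you must actually compute the critical determinant of the hexagon $K_{x,y}-K_{x,y}$ and verify it yields $\frac{2y(x+y)}{4y+x-1}$; that is doable but is an additional computation you have not supplied, and it also loses the explicit description of the optimal lattices needed for Theorem~\ref{main theorem_1}. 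In short: the skeleton is right, but the load-bearing structural inputs (the two extremal-hexagon characterizations) are asserted rather than proved or correctly cited, and for coverings that assertion is the whole difficulty.
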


\begin{thm}\label{main theorem_1}
Suppose that $(x,y)\in D$, then
$$
\Delta(x,y)=
\begin{cases}
\{\mathcal{L}((0,1),(1,t)),~ \mathcal{L}((t,1),(1,0)) :~ t\in(0,1]\} &x=y=1,\\
\left\{\mathcal{L}\left(\left(\frac{y-1}{2y},1\right),\left(\frac{3y-1}{2y},\frac{1}{2}\right)\right),\right.\\ \left.
~~~~~~~~~~~~~~~~~~~~\mathcal{L}\left(\left(-\frac{1}{2},1\right),\left(\frac{1}{2},\frac{3y-1}{2y}\right)\right) \right\} & \frac{1}{2}<x= y<1,\\
\left\{\mathcal{L}\left(\left(\frac{x-1}{2y},1\right),\left(1-\frac{1-x}{2y},\frac{1}{2}\right)\right)\right\} & otherwise,\\
\end{cases}
$$
and
$$
\Theta(x,y)=
\begin{cases}
\left\{\mathcal{L}\left( \left(-\frac{1}{3},\frac{2y}{3(1-x)}\right),\left(\frac{1}{3},\frac{y}{3(1-x)}\right)\right)  \right\} & x\leq \frac{1}{3},\\
\left\{\mathcal{L}\left(\left(\frac{x-1}{2},y \right),\left(x,\frac{y}{2}\right) \right) \right\} & \frac{1}{3}< x<y , \frac{2}{3}\leq y,\\
\left\{\mathcal{L}\left( \left(\frac{y(1+y-2x)-2(1-x)^2}{4(1-x)(1-y)-xy},\frac{(2-y)(1-x)-y^2}{4(1-x)(1-y)-xy}\right), \right. \right.\\ \left.\left. ~~~~~~~~~\left(\frac{x(2(1-x)-y)}{4(1-x)(1-y)-xy},\frac{y(2(1-y)-x)}{4(1-x)(1-y)-xy}\right)\right) \right\} & y< \frac{2}{3},\\
\left\{\mathcal{L}\left(\left(\frac{y-1}{2},y \right),\left(y,\frac{y}{2}\right)\right),\mathcal{L}\left(\left(-\frac{ y}{2},\frac{1+y}{2} \right),\left(\frac{y}{2},y\right)\right)\right\} &\frac{2}{3}<x=y<1,\\
\left\{\mathcal{L}\left(\left(-\frac{t}{2},\frac{1+t}{2}\right),\left(1-t,t\right)\right) :~t\in\left[\frac{1}{3},\frac{2}{3}\right]\right\} & x=y=\frac{2}{3},\\
\{\mathcal{L}((0,1),(1,t)),~ \mathcal{L}((t,1),(1,0)) :~ t\in(0,1]\} &x=y=1.
\end{cases}
$$
\end{thm}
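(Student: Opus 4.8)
The plan is to prove Theorems~\ref{main theorem} and~\ref{main theorem_1} together, since the density values and the extremal lattices come out of the same two extremal problems. Write $K=K_{x,y}$, so that $|K|=\tfrac{x+y}{2}$ by the shoelace formula. For packing I would use the standard criterion that $K+\Lambda$ is a packing if and only if $\Lambda\cap\operatorname{int}(K-K)=\{0\}$; hence
\[
\delta_{L}(x,y)=\frac{|K|}{\Delta(K-K)},
\]
where $\Delta(\cdot)$ denotes the critical determinant, and the optimal packing lattices $\Delta(x,y)$ are exactly the determinant-minimal admissible lattices of the symmetric polygon $K-K$. For covering I would combine Theorem~\ref{transequallattice} (so that restricting to lattices loses nothing) with the reformulation $\vartheta_{L}(x,y)=|K|/\max\{\det\Lambda:\ K+\Lambda=\mathbb{R}^{2}\}$, and then recast the condition $K+\Lambda=\mathbb{R}^{2}$ as a finite system of linear inequalities on a basis of $\Lambda$, obtained by requiring that the finitely many candidate ``deepest holes'' of the arrangement be covered. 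In both cases the problem becomes: extremize a rational function of a lattice basis subject to finitely many linear incidence constraints, the extremal bases being precisely the lattices listed in Theorem~\ref{main theorem_1}.

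For the packing half, $K-K$ is a centrally symmetric octagon whose edges are parallel to the four edge directions $(1,0),(0,1),(x-1,y),(x,y-1)$ of $K$ and whose vertices are the pairwise differences of the vertices of $K$ (it degenerates to a hexagon on $x+y=1$ and to a parallelogram at $x=y=1$). By Minkowski's theorem on critical lattices in the plane, together with its refinement for polygons, a determinant-minimal admissible lattice of $K-K$ has three symmetric pairs of boundary points $\pm\mathbf a,\pm\mathbf b,\pm(\mathbf a+\mathbf b)$ or $\pm\mathbf a,\pm\mathbf b,\pm(\mathbf a-\mathbf b)$; thus each of $\mathbf a,\mathbf b$ and $\mathbf a\pm\mathbf b$ lies on one of the four edge classes. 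I would enumerate the admissible incidence patterns, in each solve the linear system for $\mathbf a,\mathbf b$ as functions of $(x,y)$ and minimize $|\det(\mathbf a,\mathbf b)|$, and then take the pointwise minimum over patterns. Tracking which pattern wins as $(x,y)$ ranges over $D$ yields $\delta_{L}(x,y)=\tfrac{2y(x+y)}{4y+x-1}$ and, as the unique minimizing configuration, the lattice in the ``otherwise'' branch of $\Delta(x,y)$. The extra entries appear exactly where $K$ has more symmetry: when $x=y$ the reflection $(u,v)\mapsto(v,u)$ fixes $K_{x,x}$, so the extremal lattice is either self-symmetric (one lattice) or comes in a mirror pair (two lattices, as on $\tfrac12<x=y<1$), and at $x=y=1$ the full symmetry group of the square produces the displayed one-parameter family.

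The covering half follows the same template and is the main obstacle. Since no single symmetric body encodes ``$K+\Lambda=\mathbb{R}^{2}$'', the delicate step is the finite reformulation: one must express the vertices of the uncovered region as piecewise-linear functions of the basis, impose that the ``last'' one is just covered --- which pins certain vertices of $K$, or of a neighbouring translate, onto edges of other translates --- and then argue that at a determinant-maximal covering lattice all the governing incidences are tight, since any slack would allow $\det\Lambda$ to be increased without destroying the covering. This produces, in each of the regimes $x\le\tfrac13$, $\{x\ge\tfrac13,\ y\ge\tfrac23\}$ and $y<\tfrac23$, a determined linear system whose solution gives the corresponding branch of $\vartheta_{L}(x,y)$ and of $\Theta(x,y)$, while the special loci $x=y$, $x=y=\tfrac23$ and $x=y=1$ pick up extra (mirror-image, respectively one-parameter) solutions from the symmetries of $K_{x,y}$ there, exactly as in the packing case. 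Establishing the uniqueness assertions --- that the listed lattices exhaust $\Delta(x,y)$ and $\Theta(x,y)$ --- additionally requires showing that every optimal lattice must realize the winning incidence pattern, which I would do by a compactness argument together with strict monotonicity of the objective in the remaining free parameters on each cell of $D$; the explicit bases are then just the solutions of the pinned linear systems. Finally, with all the closed forms available, the inequalities $\delta_{L}(Q)\vartheta_{L}(Q)\ge 1$ and $\tfrac{1}{\delta_{L}(Q)}+\tfrac{1}{\vartheta_{L}(Q)}\ge 2$ reduce to elementary polynomial inequalities in $(x,y)$ over $D$, verified regime by regime, with equality along $x+y=1$ in the first and strict inequality throughout in the second.
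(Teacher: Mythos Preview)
Your packing half is a genuinely different route from the paper and would work. The paper never forms the difference body; instead it observes that $K_f$ sits inside the ``staircase'' hexagon $S^{f}(x')$, that $S^{f}(x')+\mathcal L((x'-1,1),(x',f(x')))$ tiles, and that in any translative packing at most one neighbour can meet the complement $(U\setminus K_f)+u_i$. This gives $\delta_L=|K_f|/\min_{x'}|S^{f}(x')|$ and a bijection $\mathcal X^{f}\to\Delta(K_f)$ directly, so the enumeration of optimal lattices reduces to a one-variable minimization of a piecewise quadratic. Your Minkowski critical-lattice argument on the octagon $K-K$ reaches the same destination and arguably explains the symmetry-induced multiplicities more conceptually, at the cost of a longer case split over incidence patterns.

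The covering half, however, has a real gap. The paper does not attack ``$K+\Lambda=\mathbb R^{2}$'' head-on; it imports from \cite{xuefei} the structural fact (Theorem~\ref{transequallattice_witharea} and Lemma~\ref{stairdown}) that every optimal covering lattice of $K_f$ arises from a tiling by an inscribed staircase hexagon $S_{f}(x_1,x_2)$, and then proves (Theorem~\ref{thetabijection}) that $(x_1,x_2)\mapsto\mathcal L((x_1-x_2,f(x_1)),(x_1,f(x_2)))$ is a bijection from the area-maximizers to $\Theta(K_f)$. That bijection is what delivers the exhaustion of $\Theta(x,y)$. Your plan replaces this with ``recast covering as finitely many linear inequalities on a basis via deepest holes'' and ``compactness plus strict monotonicity to force the winning incidence pattern'', but neither step is made concrete: the deepest-hole locus depends on $\Lambda$ in a way that is not a priori a fixed finite linear system, and nothing in the proposal shows that an arbitrary determinant-maximal covering lattice must have the particular vertex-on-edge incidences you need. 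Without an analogue of Lemma~\ref{stairdown} you can produce the listed lattices and check their densities, but you cannot conclude that the list is complete, which is precisely the content of Theorem~\ref{main theorem_1} for $\Theta(x,y)$.
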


\begin{cor}\label{cor_optimal}
Suppose that $(x,y)\in D$, then
$$
card\{\Delta(x,y)\}=
\begin{cases}
\mathfrak{c} & x=y=1\\
2 & \frac{1}{2}<x= y<1,\\
1 & otherwise,\\
\end{cases}
$$
and
$$
card\{\Theta(x,y)\}=
\begin{cases}
\mathfrak{c} & x=y=\frac{2}{3} \text{~~or~~} x=y=1,\\
2 & \frac{2}{3}<x=y<1,\\
1 & otherwise,
\end{cases}
$$
where $\mathfrak{c}$ is the cardinality of the real numbers.
\end{cor}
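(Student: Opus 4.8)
The plan is to read the two cardinality formulas directly off the explicit descriptions of $\Delta(x,y)$ and $\Theta(x,y)$ given in Theorem~\ref{main theorem_1}: in every region of $D$ the set in question is presented either as a singleton, as a pair $\{\mathcal{L}(\mathbf u_1,\mathbf u_2),\mathcal{L}(\mathbf v_1,\mathbf v_2)\}$, or as a family $\{\mathcal{L}(\mathbf u_1(t),\mathbf u_2(t)):t\in I\}$ indexed by an interval $I$, so the only values that can occur are $1$, $2$, and $\mathfrak c$, and the corollary amounts to ruling out accidental coincidences among the listed lattices. A preliminary bookkeeping step is to check that the case division in the corollary is consistent with that of Theorem~\ref{main theorem_1}: e.g.\ for $\Theta$ the three regions $x\le\frac13$, $\frac13<x<y$ with $\frac23\le y$, and $y<\frac23$ exactly cover $D\setminus\{x=y\ge\frac23\}$, and similarly for $\Delta$ the region marked \emph{otherwise} is the complement of $\{x=y\ge\frac12\}$ in $D$.

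For the regions where $\Delta$ or $\Theta$ is a singleton there is nothing to prove. For the regions where it is indexed by an interval one has $card\le\mathfrak c$ for free, so it suffices to prove $card\ge\mathfrak c$ by showing the map $t\mapsto\mathcal{L}(\mathbf u_1(t),\mathbf u_2(t))$ is injective on one branch. When $x=y=1$ one has $\mathcal{L}((0,1),(1,t))=\{(b,a+bt):a,b\in\mathbb Z\}$, and an equality with $\mathcal{L}((0,1),(1,t'))$ forces $b(t-t')\in\mathbb Z$ for every integer $b$, hence $t=t'$ since $t,t'\in(0,1]$; the case $x=y=1$ for $\Theta$ is identical. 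When $x=y=\frac23$, writing a putative equality $\Lambda_t=\Lambda_{t'}$ as an integer change of basis and comparing the two coordinates of the first generator yields the relation $t'=t-2b$ with $b\in\mathbb Z$; since $t,t'\in[\frac13,\frac23]$ this forces $b=0$, so $t=t'$. Hence each of these families is injective and, as $(0,1]$ and $[\frac13,\frac23]$ have cardinality $\mathfrak c$, so do the families.

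For the two regions with exactly two lattices ($\frac12<x=y<1$ for $\Delta$ and $\frac23<x=y<1$ for $\Theta$) I would use the standard criterion that $\mathcal{L}(\mathbf u_1,\mathbf u_2)=\mathcal{L}(\mathbf v_1,\mathbf v_2)$ if and only if the change-of-basis matrix $M$ has integer entries and determinant $\pm1$. The simplification here is that every lattice furnished by Theorem~\ref{main theorem_1} in a fixed region is optimal and hence has the same covolume, equal to $|K_{x,y}|=\frac{x+y}{2}$ divided by the fixed value $\delta_L(x,y)$ or $\vartheta_L(x,y)$ of Theorem~\ref{main theorem}; so $\det M=\pm1$ automatically, and it is enough to show $M$ is not an integer matrix. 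Solving for the coordinates of $\mathbf v_1$ in the basis $(\mathbf u_1,\mathbf u_2)$ produces a coefficient which, as a function of $x=y$, is a simple rational function whose derivative has constant sign and whose range on the relevant open interval lies inside $(-1,0)$ (for $\Delta$ one finds a coefficient $\tfrac{2-4y}{5y-1}\in(-\tfrac12,0)$ for $y\in(\tfrac12,1)$, and a similar estimate holds for $\Theta$); in particular it is never an integer, so $M\notin\mathrm{GL}_2(\mathbb Z)$ and the two listed lattices are distinct for every $(x,y)$ in the region.

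Essentially all the substance is already contained in Theorem~\ref{main theorem_1}, so there is no serious obstacle: what remains are short verifications. The one point that needs a little care is the two-element case, where distinctness of the two lattices must be established \emph{uniformly} over the whole relevant open interval and not merely for generic parameters — this is exactly what the monotone rational-function estimate above supplies. No idea beyond the $\mathrm{GL}_2(\mathbb Z)$-criterion and the constant-covolume observation is required.
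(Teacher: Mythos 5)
Your proposal is correct, and all the specific computations I checked (the coefficient $\tfrac{2-4y}{5y-1}\in(-\tfrac12,0)$ for the two packing lattices, the relation $t'=t-2b$ at $x=y=\tfrac23$, the injectivity of $t\mapsto\mathcal L((0,1),(1,t))$ on $(0,1]$) do come out as you say. However, you take a genuinely different route from the paper. The paper never verifies distinctness of the listed lattices by a change-of-basis argument: it constructs $\Theta(x,y)$ and $\Delta(x,y)$ as the images of the explicitly computed parameter sets $\mathcal X_*(x,y)$ and $\mathcal X^*(x,y)$ under the maps $\Lambda_{f}$ and $\Lambda^{f}$, and Theorems \ref{thetabijection} and \ref{deltabijection} establish that these maps are \emph{bijections} onto $\Theta(K_f)$ and $\Delta(K_f)$. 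Hence $card\{\Theta(x,y)\}=card\{\mathcal X_*(x,y)\}$ and $card\{\Delta(x,y)\}=card\{\mathcal X^*(x,y)\}$, and the corollary is read off from the case-by-case computations of $\mathcal X_*$ and $\mathcal X^*$ in Sections 4 and 5 (a singleton, a two-point set, or an interval-parametrized family), with the unit square $x=y=1$ handled separately. The paper's route gets distinctness for free from injectivity of $\Lambda_f$ and $\Lambda^f$ (which was proved once, geometrically, via $\Lambda_f(x_1,x_2)\cap Int(K_f)=\{(x_1,f(x_2))\}$); your route re-derives it by hand with the $\mathrm{GL}_2(\mathbb Z)$ criterion plus the equal-covolume observation, which costs some explicit rational-function estimates but has the merit of being verifiable directly from the statement of Theorem \ref{main theorem_1} without invoking the bijection machinery. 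Both are sound; yours is more self-contained but duplicates work the paper has already done structurally.
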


\begin{cor}\label{main cor}
For every convex quadrilaterals $Q$, we have
$$\delta_{L}(Q)\vartheta_{L}(Q)\geq 1$$
and
$$\frac{1}{\delta_{L}(Q)}+\frac{1}{\vartheta_{L}(Q)}\geq 2.$$
\end{cor}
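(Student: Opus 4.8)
The plan is to reduce both inequalities to the explicit formulas of \thmref{main theorem}. Since $\delta_L$ and $\vartheta_L$ are affinely invariant, it suffices to treat $Q=K_{x,y}$ with $(x,y)\in D$; then each claim becomes an explicit, piecewise rational inequality in $x$ and $y$. I would split $D$ into the three pieces on which the three branches of $\vartheta_L$ apply, namely $D_1=\{(x,y)\in D:x\le\frac13\}$, $D_2=\{(x,y)\in D:x\ge\frac13,\ y\ge\frac23\}$ and $D_3=\{(x,y)\in D:y<\frac23\}$. On $D_1$ one automatically has $y\ge1-x\ge\frac23$, and on $D_3$ one automatically has $\frac12\le y<\frac23$ together with $1-y\le x\le y$. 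All the denominators occurring below are visibly positive: $4y+x-1\ge3y>0$ and $1+3x>0$ throughout $D$, while on $D_3$ the quantity $4(1-x)(1-y)-xy$ is affine and decreasing in $x$ with value $(3y-2)(y-2)>0$ at $x=y$, and $x(1-x)+y(1-y)-xy$ is concave in $x$ with values $y(1-y)>0$ at $x=1-y$ and $y(2-3y)>0$ at $x=y$, hence positive on $[1-y,y]$. Each inequality therefore reduces to the sign of an explicit polynomial numerator, to be analysed piece by piece.

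For $\delta_L\vartheta_L\ge1$: on $D_2$ the numerator of $\delta_L\vartheta_L-1$ collapses to a perfect square, giving $\delta_L\vartheta_L-1=\dfrac{(x+1-2y)^2}{(4y+x-1)(1+3x)}\ge0$. On $D_1$ the numerator of $\delta_L\vartheta_L-1$ equals $g(y):=3(x+y)^2(1-x)-(4y+x-1)$; one checks that $g$ is convex in $y$, that $g(1-x)=0$, and that $g'(1-x)=2-6x\ge0$ for $x\le\frac13$, whence $g$ is nondecreasing on $[1-x,1]$ and so $g\ge0$ there. On $D_3$ the numerator factors as $(x+y-1)\,q(x)$ with $q(x)=(3y^2-4y+1)x^2+(3y^3-5y^2+4y-1)x-y(2y-1)^2$; the leading coefficient $(3y-1)(y-1)$ is negative on $\frac12\le y<\frac23$, so $q$ is concave in $x$, while $q(1-y)=y(2-3y)(2y-1)\ge0$ and $q(y)=y(2y-1)(3y-2)(y-1)\ge0$, and a concave quadratic that is nonnegative at two points is nonnegative on the interval between them, so $q\ge0$ on $[1-y,y]$; since $x+y-1\ge0$ on $D$, this gives $\delta_L\vartheta_L\ge1$.

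For $\frac1{\delta_L}+\frac1{\vartheta_L}\ge2$ the structure is identical. On $D_2$ the numerator of $\frac1{\delta_L}+\frac1{\vartheta_L}-2$ factors as $(3y-1)(1-y)(1-x)$, which is $\ge0$ because $y\ge\frac23$ and $x\le y\le1$. On $D_1$ the numerator is $h(y):=(12x-8)y^2+12(1-x)^2y-3(1-x)^2$; since $12x-8<0$ it is concave in $y$, with $h(1-x)=(1-x)^2\ge0$ and $h(1)=(3x-1)^2\ge0$, so $h\ge0$ on $[1-x,1]$. On $D_3$ the numerator is a quadratic $M(x)$ in $x$ with positive leading coefficient $-12y^2+15y-4$; one checks that $M$ is decreasing in $x$ on $[1-y,y]$ (equivalently $\partial_xM|_{x=y}=-36y^3+70y^2-43y+8<0$ for $\frac12\le y\le\frac23$, a single-variable polynomial inequality), so the minimum of $M$ there is $M(y)=(3y-2)(1-y)(8y^2-9y+2)$, which is $\ge0$ because two of its three factors are negative on $[\frac12,\frac23)$. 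Assembling the three pieces proves both inequalities, and the corollary follows.

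I expect $D_3$ to be where the real work lies: it carries the complicated third branch of $\vartheta_L$, and both reductions there produce genuine two-variable polynomial inequalities; the crux is recognising the factorisation $(x+y-1)q(x)$ in the first case and the monotonicity in $x$ in the second, which is what reduces everything to the one-variable facts above. I would also stress that the two inequalities are logically independent---neither follows formally from the other together with $\delta_L\le1\le\vartheta_L$---so both genuinely require this case analysis. Finally, the degenerate subfamilies give good arithmetic checks: the triangles on $\{x+y=1\}$ and the family $\{x=2y-1\}\subset D_2$ are precisely where $\delta_L\vartheta_L=1$, while the segment $\{y=1\}\subset D_2$ (containing the square $K_{1,1}$) is precisely where $\frac1{\delta_L}+\frac1{\vartheta_L}=2$.
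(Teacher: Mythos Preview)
Your proof is correct and follows essentially the same route as the paper: the same three-region split $B_1,B_2,B_3$, the same polynomial numerators (your $q$ is the paper's $\pi^*$, your $h$ is the paper's $\eta_1$, and the $D_2$ factorisations $(x+1-2y)^2$ and $(3y-1)(1-y)(1-x)$ match exactly), and the same concavity-at-endpoints arguments on the hard pieces. The one tactical difference worth noting is on $D_3$ for the second inequality: the paper factors $(1-x)$ out of the numerator to obtain a \emph{linear} function $\eta_3^*(x,y)$ in $x$, so it suffices to check the two endpoints $x=1-y$ and $x=y$ directly, which is a bit cleaner than your convexity-plus-monotonicity reduction via $\partial_xM|_{x=y}<0$.
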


\begin{cor}\label{main cor_2}
For every convex quadrilaterals $Q$, we have
$$\delta_{L}(Q)+\vartheta_{L}(Q)\geq 2$$
and
$$\vartheta_{L}(Q)\leq 1+\frac{5}{4}\sqrt{1-\delta_{L}(Q)}.$$
\end{cor}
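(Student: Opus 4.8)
The plan is to derive both inequalities from Corollary \ref{main cor} together with the explicit closed-form expressions for $\delta_L(x,y)$ and $\vartheta_L(x,y)$ furnished by Theorem \ref{main theorem}. For the first inequality, $\delta_L(Q)+\vartheta_L(Q)\ge 2$: since $\delta_L,\vartheta_L>0$, the AM--GM inequality gives $\delta_L(Q)+\vartheta_L(Q)\ge 2\sqrt{\delta_L(Q)\vartheta_L(Q)}$, and by the bound $\delta_L(Q)\vartheta_L(Q)\ge 1$ from Corollary \ref{main cor} the right-hand side is at least $2$. So this half is essentially free.

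The second inequality, $\vartheta_L(Q)\le 1+\tfrac{5}{4}\sqrt{1-\delta_L(Q)}$, is the substantive part. Here I would argue as follows. Set $d=\delta_L(x,y)$ and $v=\vartheta_L(x,y)$. We always have $d\le 1\le v$ (a quadrilateral is not a tile unless it is a parallelogram, and even then $d=v=1$), so $1-d\ge 0$ and the square root is real. The inequality to prove is equivalent to $v-1\le \tfrac54\sqrt{1-d}$, i.e.\ $(v-1)^2\le \tfrac{25}{16}(1-d)$. I would reduce to this polynomial inequality in $x,y$ after substituting the formulas from Theorem \ref{main theorem}, and then verify it by a case analysis matching the three regimes in the definition of $\vartheta_L(x,y)$ (namely $x\le\tfrac13$; $x\ge\tfrac13$ and $y\ge\tfrac23$; and $y<\tfrac23$), always keeping the constraints $(x,y)\in D$, that is $0\le x\le y\le1$ and $x+y\ge1$. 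In each region, clearing denominators turns the claim into a polynomial inequality in the two variables $x,y$ over a convex polygonal (or semialgebraic) subregion of $D$; one then checks it either by exhibiting an explicit sum-of-squares / factored form, or by checking that it holds on the boundary of the region and has no interior violation via the sign of a gradient.

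I expect the main obstacle to be the third case, $y<\tfrac23$, where $\vartheta_L$ has the most complicated expression
$\frac{(x+y)(4(1-x)(1-y)-xy)}{2(x(1-x)+y(1-y)-xy)}$
while $\delta_L=\frac{2y(x+y)}{4y+x-1}$; substituting both into $(v-1)^2\le\tfrac{25}{16}(1-d)$ and clearing the (positive) denominators yields a polynomial inequality of fairly high degree whose nonnegativity on the relevant subregion of $D$ is not transparent. My strategy there would be to exploit the boundary structure: along $x=y$ (the ``worst'' direction, where multiplicity-$2$ optimal lattices appear, cf.\ Corollary \ref{cor_optimal}) the inequality should be tight or nearly tight, which both pins down the constant $\tfrac54$ as optimal and suggests writing the polynomial difference as $(x-y)$ times something manifestly nonnegative plus a remainder. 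One sanity check worth recording is the triangle limit $x=y=1$, where $d=\tfrac23$, $v=\tfrac32$, and indeed $\tfrac32-1=\tfrac12\le\tfrac54\sqrt{1-\tfrac23}=\tfrac54\cdot\tfrac{1}{\sqrt3}\approx0.72$, so the bound is satisfied with room to spare; and the parallelogram corner $x+y=1$, $x=0$ (whence $y=1$), where $d=v=1$ and the inequality is the equality $0\le0$. Pinning down that the inequality never fails on the interior — most plausibly by reducing, after the substitution, to a one-variable inequality along lines $x+y=\text{const}$ or $y=\text{const}$ and checking its discriminant — is where the real calculational work lies, but it is exactly the kind of bounded, explicit verification the closed forms of Theorem \ref{main theorem} were designed to make possible.
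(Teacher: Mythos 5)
Your treatment of the first inequality is correct and coincides with the paper's: AM--GM applied to $\delta_{L}(Q)\vartheta_{L}(Q)\geq 1$ gives $\delta_{L}(Q)+\vartheta_{L}(Q)\geq 2$ immediately. The problem is the second inequality, where what you offer is a plan rather than a proof. You reduce the claim to $(v-1)^2\leq \tfrac{25}{16}(1-d)$ and then say you would substitute the closed forms from Theorem~\ref{main theorem}, clear denominators, and establish the resulting two-variable polynomial inequalities region by region ``either by exhibiting an explicit sum-of-squares / factored form, or by checking that it holds on the boundary''; you yourself flag that in the region $y<\tfrac{2}{3}$ this is ``where the real calculational work lies.'' None of that work is carried out, and there is no a priori guarantee that the high-degree polynomial you would obtain admits a clean SOS or boundary-reduction certificate, so as written the substantive half of the corollary is unproved. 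A secondary error: your two sanity checks are attached to the wrong parameter points. The corner $x=y=1$ is the unit square $K_{1,1}$ (a parallelogram), where $\delta_{L}=\vartheta_{L}=1$ and the bound is an equality; the corner $(x,y)=(0,1)$ makes the fourth vertex coincide with $(0,1)$, so $K_{0,1}$ is the triangle with $\delta_{L}=\tfrac{2}{3}$, $\vartheta_{L}=\tfrac{3}{2}$. You have these swapped, and your inference that tightness along $x=y$ ``pins down the constant $\tfrac{5}{4}$ as optimal'' does not follow (indeed your own computation at the triangle shows slack).

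The gap is avoidable, and the paper's route shows how: it never touches the formulas of Theorem~\ref{main theorem} for this corollary. Instead it uses the second inequality of Corollary~\ref{main cor}, $\tfrac{1}{\delta_{L}(Q)}+\tfrac{1}{\vartheta_{L}(Q)}\geq 2$ (which is proved separately by exactly the kind of case analysis on $B_1,B_2,B_3$ you are proposing to redo for a harder target), together with F\'ary's universal bounds $\tfrac{2}{3}\leq \delta_{L}\leq 1\leq\vartheta_{L}\leq\tfrac{3}{2}$. These reduce the claim to a one-variable fact: if $\tfrac{2}{3}\leq u\leq 1\leq v\leq\tfrac{3}{2}$ and $\tfrac{1}{u}+\tfrac{1}{v}\geq 2$, then $v\leq 1+\tfrac{5}{4}\sqrt{1-u}$. (Concretely: for $u\leq\tfrac{3}{4}$ one has $v-1\leq\tfrac{1}{2}\leq\tfrac{5}{4}\sqrt{1-u}$ since $1-u\geq\tfrac{1}{4}>\tfrac{4}{25}$; for $u\geq\tfrac{3}{4}$ one has $v-1\leq\tfrac{1-u}{2u-1}$ and the claim becomes $1-u\leq\tfrac{25}{16}(2u-1)^2$, which holds since the right side minus the left is increasing and positive at $u=\tfrac{3}{4}$.) If you intend to keep your direct approach, you must actually produce the polynomial certificates in each of the three regions; otherwise, route the argument through the harmonic inequality as above and the corollary follows with no further two-variable computation.
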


\begin{rem}
It is well known that $\delta_T(K)=\delta_L(K)$, for every convex disks $K$ (see \cite{rogers_2} or \cite{fejes_2}). Furthermore, according to Theorem \ref{transequallattice}, we know that $\vartheta_T(Q)=\vartheta_L(Q)$, for each convex quadrilateral $Q$. Therefore, the above statements also hold for $\delta_T$ and $\vartheta_T$.
\end{rem}

\section{The lattice packings and coverings of $K_f$}
In this section, for convenience, we assume that $K_f$ is not a unit square.

For any real numbers $x,x',y,y'$, let $L(x,y,x',y')$ denote the line segment between $(x,y)$ and $(x',y')$. Suppose that $0\leq x\leq1$, denote by $S^{f}(x)$ the polygon bounded by $L(0,0,0,1)$, $L(0,1,x,1)$, $L(x,1,x,f(x))$, $L(x,f(x),1,f(x))$, $L(1,f(x),1,0)$ and $L(1,0,0,0)$. For $0\leq x_1\leq x_2\leq1$, denote by $S_{f}(x_1,x_2)$ the polygon bounded by $L(0,0,0,f(x_1))$, $L(0,f(x_1),x_1,f(x_1))$, $L(x_1,f(x_1),x_1,f(x_2))$, $L(x_1,f(x_2),x_2,f(x_2))$, $L(x_2,f(x_2),x_2,0)$ and $L(x_2,0,0,0)$.

\begin{figure}[ht]
\begin{minipage}[b]{0.45\linewidth}
\centering
\includegraphics[width=\textwidth]{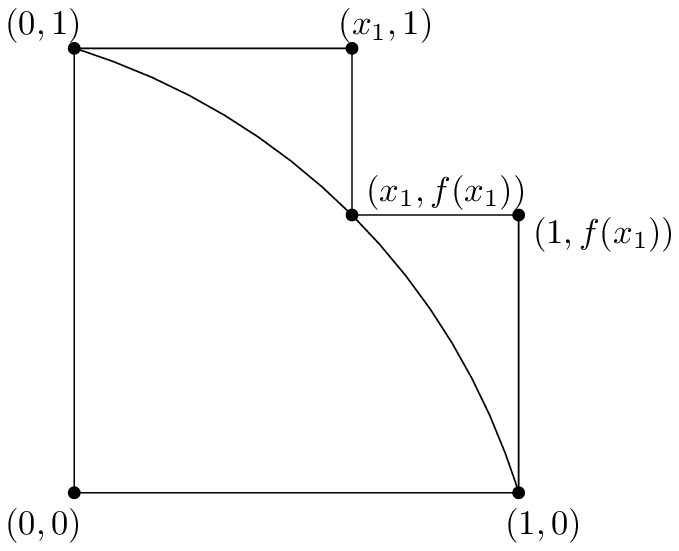}
\caption{$S^{f}(x_1)$}
\end{minipage}
\hspace{0.5cm}
\begin{minipage}[b]{0.45\linewidth}
\centering
\includegraphics[width=\textwidth]{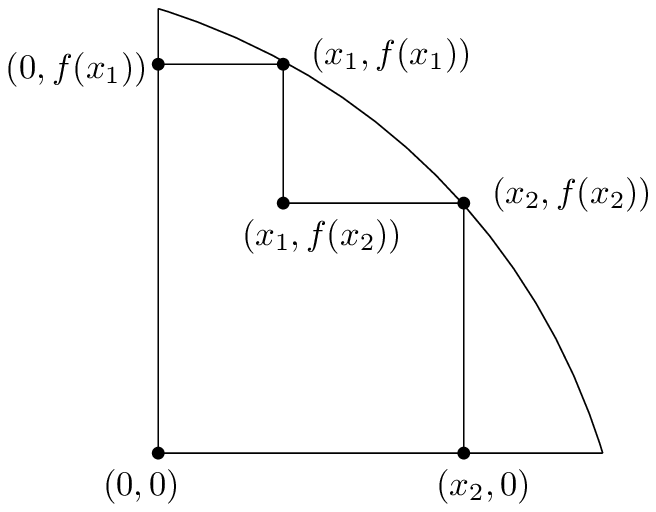}
\caption{$S_{f}(x_1,x_2)$}
\end{minipage}
\end{figure}

Define
$$A^{f}= \min \{|S^{f}(x)| : 0\leq x\leq 1\},$$
and
$$A_{f}= \max \{|S_{f}(x_1,x_2)| : 0\leq x_1\leq x_2\leq 1\}.$$
Let
$$\mathcal{X}^{f}=\{x : 0\leq x\leq 1, |S^{f}(x)|=A^{f} \},$$
and
$$\mathcal{X}_{f}=\{(x_1,x_2) : 0\leq x_1\leq x_2\leq1, |S_{f}(x_1,x_2)|=A_{f} \}.$$
The paper of Xue and Kirati \cite{xuefei} showed that
\begin{thm}\label{transequallattice_witharea}
$\vartheta_T(K_f)=\vartheta_L(K_f)=\frac{|K_f|}{A_{f}}$.
\end{thm}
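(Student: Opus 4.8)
The plan is to reduce everything to the two extremal quantities $A^f$ and $A_f$ and to show that the lattice covering density equals $|K_f|/A_f$, while simultaneously confirming that the translative covering density cannot do better. Since Theorem~\ref{transequallattice} already gives $\vartheta_T(K_f)=\vartheta_L(K_f)$, the real content is the identification of the common value with $|K_f|/A_f$. The first step is to understand the geometric meaning of the polygon $S_f(x_1,x_2)$: I would show that for any $0\le x_1\le x_2\le 1$, a suitable lattice $\Lambda=\Lambda(x_1,x_2)$ (generated by two vectors read off from the two ``steps'' of the boundary of $S_f(x_1,x_2)$) has the property that $K_f+\Lambda$ covers the plane, and that a fundamental domain of $\Lambda$ has area exactly $|S_f(x_1,x_2)|$. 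This gives the upper bound $\vartheta_L(K_f)\le |K_f|/|S_f(x_1,x_2)|$ for every admissible pair, hence $\vartheta_L(K_f)\le |K_f|/A_f$. The construction should exploit the monotonicity and convexity of $f$: the graph $C_f$ together with the staircase determined by $x_1,x_2$ pins down how two translated copies of $K_f$ can be fitted against a third so that no gap remains.

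The second step is the matching lower bound $\vartheta_T(K_f)\ge |K_f|/A_f$. Here I would argue directly about an arbitrary translative covering $K_f+X$ of $\mathbb{R}^2$. Fix a large box, and for each copy in the covering look at the vertical strips and the way the graph $C_f$ of one copy must be covered by neighbouring copies. The key is a local inequality: associated to each translate there is a region whose area is at least $A_f$ and which is ``charged'' to that translate in such a way that these regions have bounded overlap multiplicity in the limit. Concretely, because $f$ is convex and non-increasing, the only way a translate's distinguished curve $C_f$ can be covered is by at most two other translates meeting it along its upper and right portions, and the positions of those two translates are constrained to produce a sub-configuration combinatorially identical to some $S_f(x_1,x_2)$; its area is therefore $\le A_f$, which after taking reciprocals yields the density lower bound. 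Averaging over the box and letting it grow gives $d_-(K_f+X)\ge |K_f|/A_f$.

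The main obstacle, I expect, is the lower bound and in particular making the ``charging'' rigorous: one must rule out pathological coverings in which many thin slivers of extra copies conspire to cover $C_f$, so that the two-neighbour picture fails. The convexity of $f$ is exactly what should prevent this — a convex arc cannot be covered more efficiently by many small pieces than by two large ones positioned at its extreme supporting lines — but turning this intuition into a clean area estimate that survives the limiting process is the delicate part. A secondary technical point is handling the boundary cases where $x_1=x_2$ or where $x_1=0$ or $x_2=1$ (degenerate staircases), and verifying that the supremum defining $A_f$ is attained (so that $\mathcal{X}_f\neq\emptyset$), which follows from continuity of $(x_1,x_2)\mapsto|S_f(x_1,x_2)|$ on a compact set. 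Once both bounds are in hand, combining with Theorem~\ref{transequallattice} closes the argument; the role of $A^f$ and $\mathcal{X}^f$, I anticipate, is parallel and enters when the same circle of ideas is applied to packings rather than coverings later in the paper.
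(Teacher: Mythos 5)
The first thing to say is that the paper contains no proof of this statement: Theorem~\ref{transequallattice_witharea} is imported wholesale from the reference \cite{xuefei}, and the only fragment of it that is actually argued in the present text is the easy half --- the observation (made just after the theorem, in order to set up the map $\Lambda_f$) that $S_{f}(x_1,x_2)+\mathcal{L}((x_1-x_2,f(x_1)),(x_1,f(x_2)))$ tiles the plane and $S_{f}(x_1,x_2)\subset K_f$, so that choosing $(x_1,x_2)\in\mathcal{X}_f$ gives a lattice covering of density exactly $|K_f|/A_f$ and hence $\vartheta_L(K_f)\leq |K_f|/A_f$. Your first step reproduces exactly this construction and is correct; your remark that the maximum defining $A_f$ is attained by compactness is also fine.

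The genuine gap is the matching lower bound $\vartheta_T(K_f)\geq |K_f|/A_f$, which is the entire substance of the theorem, and your proposal only gestures at it. The charging argument as you describe it does not go through as stated: it is not true that the curve $C_f$ of a given translate can be met by at most two other translates along its upper and right portions --- an arbitrary translative covering may lay arbitrarily many copies across $C_f$ --- and the claim that one can always extract from the covering a sub-configuration ``combinatorially identical to some $S_f(x_1,x_2)$'' of area at most $A_f$, with the resulting cells decomposing a large box with negligible overlap, is precisely the assertion that needs proof. You correctly flag this as the delicate point, but flagging it is not resolving it; as written the key local inequality is asserted rather than established. (There is also an internal slip: you first say the region charged to each translate has area ``at least $A_f$'' and later conclude it is $\leq A_f$; only the latter direction yields the lower bound on the covering density.) Note that an alternative, slightly cheaper route is available to you: since Theorem~\ref{transequallattice} is already granted, it would suffice to prove the \emph{lattice} lower bound $\vartheta_L(K_f)\geq |K_f|/A_f$, i.e.\ that every covering lattice of $K_f$ has determinant at most $A_f$ --- essentially the content of Lemma~\ref{stairdown} --- but the paper likewise only ``deduces'' that from \cite{xuefei}. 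So in the end your proposal, like the paper itself, reduces the theorem to an external argument rather than proving it.
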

Furthermore, from the results of the paper, we can also deduce that
\begin{lem}\label{stairdown}
For any (2-dimension) lattice $\Lambda$, if $\Lambda\in \Theta(K_f)$ (i.e., $K_f+\Lambda$ is a covering of $\mathbb{R}^2$ with density $\vartheta_L(K_f)$), then there exist $0\leq x_1\leq x_2\leq1$ such that $S_{f}(x_1,x_2)\subset K_f$, $S_{f}(x_1,x_2)+\Lambda$ is a tiling of $\mathbb{R}^2$, and $|S_{f}(x_1,x_2)|=A_{f}$
\end{lem}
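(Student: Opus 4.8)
The plan is to extract this statement from the proof machinery of Theorem~\ref{transequallattice_witharea} in \cite{xuefei}. The key point is that the equality $\vartheta_L(K_f)=\frac{|K_f|}{A_f}$ is not merely an identity of numbers but comes with a structural characterization of the optimal covering lattices. I would start from an arbitrary $\Lambda\in\Theta(K_f)$, so that $K_f+\Lambda$ covers $\mathbb{R}^2$ with density exactly $\vartheta_L(K_f)=\frac{|K_f|}{A_f}$; equivalently, the fundamental domain of $\Lambda$ has area exactly $A_f$. The task is to produce an explicit ``staircase'' region $S_f(x_1,x_2)\subseteq K_f$ of that same area $A_f$ which tiles under $\Lambda$.

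First I would recall how the bound $\vartheta_L(K_f)\ge \frac{|K_f|}{A_f}$ is proved: for any covering lattice $\Lambda$, one shows that a fundamental cell of $\Lambda$ cannot have area exceeding $A_f$, because otherwise some translate $K_f+v$ ($v\in\Lambda$) would have to be ``responsible'' for covering a subregion larger than any $S_f(x_1,x_2)$, contradicting the definition of $A_f$ as the maximum. Concretely, the argument in \cite{xuefei} identifies, for a given covering lattice, two lattice vectors $u,w$ such that the translates $K_f$, $K_f+u$, $K_f+w$ (and $K_f+u+w$) interlock along the staircase boundary $C_f$, and the portion of a fundamental cell covered by $K_f$ alone is exactly a region of the form $S_f(x_1,x_2)$ for suitable breakpoints $x_1,x_2$ determined by where the neighboring translates first meet the graph of $f$. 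When $\Lambda$ is optimal, equality forces $|S_f(x_1,x_2)|=A_f$, and since $S_f(x_1,x_2)$ is by construction a subset of $K_f$ lying below the graph, and its $\Lambda$-translates tile (they partition the fundamental cell and hence all of $\mathbb{R}^2$), all three conclusions follow simultaneously.

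So the steps, in order, are: (1) fix $\Lambda\in\Theta(K_f)$ and note $|\mathbb{R}^2/\Lambda|=A_f$; (2) invoke the structural part of the proof of Theorem~\ref{transequallattice_witharea} to locate the two ``staircase-adjacent'' lattice directions and the induced breakpoints $x_1\le x_2$; (3) verify that the region cut out by these translates inside a fundamental cell is precisely $S_f(x_1,x_2)$, that it sits inside $K_f$, and that it has area equal to the cell area, which must then be $A_f$; (4) conclude that $S_f(x_1,x_2)+\Lambda$ is a tiling. The main obstacle is step (2)–(3): I must make sure the minimality/maximality argument in \cite{xuefei} is genuinely reversible, i.e.\ that it does not just bound the density but actually exhibits the optimal cell as a staircase region, and that degenerate configurations (e.g.\ $x_1=x_2$, or breakpoints at $0$ or $1$, or $K_f$ a triangle) are covered by the same description. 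Handling these boundary cases cleanly, rather than the generic interlocking picture, is where the real care is needed; the rest is bookkeeping with the already-established area identity.
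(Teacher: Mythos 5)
Your proposal takes essentially the same route as the paper: the paper offers no written proof of Lemma~\ref{stairdown} at all, merely asserting that it can be deduced ``from the results of'' \cite{xuefei}, and your plan --- to extract the structural characterization of optimal covering lattices from the proof machinery behind Theorem~\ref{transequallattice_witharea} in that reference --- is exactly the deduction intended. Since the paper supplies no more detail than you do, the burden of verifying that the argument in \cite{xuefei} genuinely exhibits the optimal fundamental cell as a staircase region $S_f(x_1,x_2)\subset K_f$ (including the degenerate configurations you flag) rests on the external reference in both cases.
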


For any $0\leq x_1\leq x_2\leq 1$, it is easy to see that $S_{f}(x_1,x_2)+\mathcal{L}((x_1-x_2,f(x_1)),(x_1,f(x_2)))$ is a tiling of $\mathbb{R}^2$ (see Figure \ref{lambdadown}). Define $\Lambda_{f}$ by
\begin{equation}\label{lambdadownlattice}
\Lambda_{f}(x_1,x_2)=\mathcal{L}((x_1-x_2,f(x_1)),(x_1,f(x_2))).
\end{equation}
Since $S_{f}(x_1,x_2)\subset K_f$, we obtain $K_f+\Lambda_{f}(x_1,x_2)$ is a covering of $\mathbb{R}^2$ with density $\frac{|K_f|}{|S_{f}(x_1,x_2)|}$. By Theorem \ref{transequallattice_witharea}, we know that $\Lambda_{f}$ can be seen as a mapping from $\mathcal{X}_{f}$ to $\Theta(K_f)$. From Lemma \ref{stairdown}, we obtain $\Lambda_{f}$ is surjective. To show that $\Lambda_{f}$ is injective, we first prove the following lemma.

\begin{figure}[!ht]
  \centering
    \includegraphics[scale=.8]{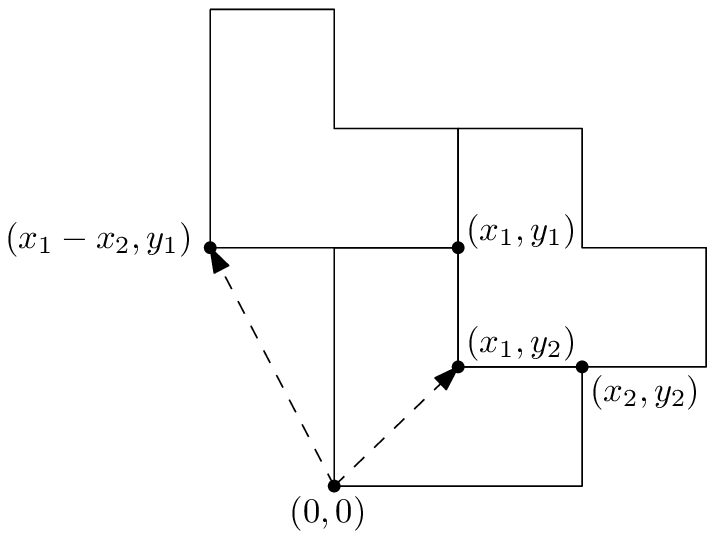}
   \caption{$\mathcal{L}((x_1-x_2,y_1),(x_1,y_2))$}\label{lambdadown}
\end{figure}

\begin{lem}\label{property of p1p2}
Suppose that $(x_1,x_2)\in \mathcal{X}_{f}$. We have
$$2x_1\geq x_2~,~2x_2\geq 1+x_1,$$
and
$$2f(x_2)\geq f(x_1)~,~2f(x_1)\geq 1+f(x_2).$$
\end{lem}
 \begin{proof}
 Since $K_f$ is not a square, one can easily see that $0<x_1<x_2$. By the definition of $\mathcal{X}_{f}$, we know that for all sufficiently small $\epsilon>0$,
 \begin{align*}
 x_1f(x_1)+(x_2-x_1)f(x_2)&= |S_{f}(x_1,x_2)|\\
 &\geq |S_{f}(x_1-\epsilon,x_2)|\\
 &=(x_1-\epsilon)f(x_1-\epsilon)+(x_2-x_1+\epsilon)f(x_2).
 \end{align*}
 Since $f$ is convex and $f(0)=1$, we have
 $$f(x_1-\epsilon)\geq f(x_1)+\frac{1-f(x_1)}{x_1}\epsilon.$$
 Therefore
 $$x_1f(x_1)+(x_2-x_1)f(x_2) \geq (x_1-\epsilon)\left(f(x_1)+\frac{1-f(x_1)}{x_1}\epsilon\right)+(x_2-x_1+\epsilon)f(x_2)$$
 From this, we can easily obtain
 $$2f(x_1)\geq 1+f(x_2)-\frac{1-f(x_1)}{x_1}\epsilon.$$
On allowing $\epsilon$ to end to zero, we get
$$2f(x_1)\geq 1+f(x_2).$$
By symmetry, we have
$$2x_2 \geq 1+x_1.$$
On the other hand,
 \begin{align*}
 x_1f(x_1)+(x_2-x_1)f(x_2)&= |S_{f}(x_1x_2)|\\
 &\geq |S_{f}(x_1,x_2-\epsilon)|\\
 &=x_1f(x_1)+(x_2-\epsilon-x_1)f(x_2-\epsilon).
 \end{align*}
 Because $f$ is convex, we have
 $$f(x_2-\epsilon)\geq f(x_2)+\frac{f(x_1)-f(x_2)}{x_2-x_1}\epsilon.$$
 Therefore
 $$ x_1f(x_1)+(x_2-x_1)f(x_2)\geq x_1f(x_1)+(x_2-\epsilon-x_1)\left(f(x_2)+\frac{f(x_1)-f(x_2)}{x_2-x_1}\epsilon\right).$$
From this, we can deduce that
$$2f(x_2)\geq f(x_1)-\frac{f(x_1)-f(x_2)}{x_2-x_1}\epsilon,$$
and hence
$$2f(x_2)\geq f(x_1).$$
By symmetry, we immediately get
$$2x_1\geq x_2.$$
This completes the proof.
 \end{proof}

\begin{thm}\label{thetabijection}
$\Lambda_{f}$ is a bijection from $\mathcal{X}_{f}$ to $\Theta(K_f)$
\end{thm}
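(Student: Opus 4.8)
The plan is to combine the already-established surjectivity of $\Lambda_{f}$ with a direct proof of injectivity. Surjectivity has been explained in the text preceding the statement: given $\Lambda\in\Theta(K_f)$, Lemma~\ref{stairdown} produces a pair $0\le x_1\le x_2\le 1$ with $S_f(x_1,x_2)\subset K_f$, $S_f(x_1,x_2)+\Lambda$ a tiling, and $|S_f(x_1,x_2)|=A_f$; the last condition says $(x_1,x_2)\in\mathcal{X}_f$, and since a tiling lattice of $S_f(x_1,x_2)$ is determined by the polygon up to the standard choice, one checks $\Lambda=\Lambda_f(x_1,x_2)$. So the whole content is injectivity: if $(x_1,x_2),(x_1',x_2')\in\mathcal{X}_f$ and $\Lambda_f(x_1,x_2)=\Lambda_f(x_1',x_2')$, then $(x_1,x_2)=(x_1',x_2')$.

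First I would unwind the definition~\eqref{lambdadownlattice}: the lattice $\Lambda_f(x_1,x_2)=\mathcal{L}\bigl((x_1-x_2,f(x_1)),(x_1,f(x_2))\bigr)$. Equality of two lattices means each generating pair is an integer-unimodular combination of the other. I would first pin down the fundamental domain area: since $S_f(x_1,x_2)+\Lambda_f(x_1,x_2)$ is a tiling, $\det\Lambda_f(x_1,x_2)=|S_f(x_1,x_2)|=A_f$, which is the same for both pairs, so the change of basis between them lies in $GL_2(\mathbb{Z})$. Then I would use the geometric constraints from Lemma~\ref{property of p1p2} — namely $2x_1\ge x_2$, $2x_2\ge 1+x_1$, $2f(x_2)\ge f(x_1)$, $2f(x_1)\ge 1+f(x_2)$, together with $0<x_1<x_2<1$ and the corresponding ordering $f$ forces on the $y$-coordinates — to show that the two generators $v_1=(x_1-x_2,f(x_1))$ and $v_2=(x_1,f(x_2))$ are, up to sign, the only lattice vectors of $\Lambda_f(x_1,x_2)$ in certain small regions (e.g. $v_2$ is the unique lattice point with first coordinate in $(0,1]$ and positive second coordinate below a threshold, and $v_1$ the unique one with first coordinate in $[-1,0)$). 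This characterizes the generating pair intrinsically from the lattice, and hence forces $(x_1-x_2,f(x_1))=(x_1'-x_2',f(x_1'))$ and $(x_1,f(x_2))=(x_1',f(x_2'))$, giving $x_1=x_1'$, $x_2=x_2'$ immediately from the first coordinates.

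The main obstacle I anticipate is the intrinsic characterization of the generators: a priori $\Lambda_f(x_1,x_2)$ could have many short vectors, and to rule out alternative bases I need the inequalities of Lemma~\ref{property of p1p2} to confine the candidate generators to disjoint slabs in the first coordinate. The cleanest route is probably to observe that the tile $S_f(x_1,x_2)$ has horizontal extent $[0,x_2]$ with $x_2<1$ and that $v_2$ has first coordinate $x_1\in(0,x_2)$ while $v_1$ has first coordinate $x_1-x_2\in(-x_2,0)$, and no nonzero lattice vector other than $\pm v_1,\pm v_2,\pm(v_1+v_2)$ can have first coordinate of absolute value less than $x_2$ — this is exactly the statement that these are the "edge vectors" of the tiling by the hexagon $S_f(x_1,x_2)$. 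Establishing that via the tiling picture (Figure~\ref{lambdadown}) and the convexity/monotonicity of $f$ should close the gap; the rest is bookkeeping. I would also handle the degenerate possibility $x_1=x_2$ (excluded here since $K_f$ is not a square, as already noted in the proof of Lemma~\ref{property of p1p2}) to be safe.
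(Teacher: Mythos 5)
Your overall architecture (surjectivity from Lemma~\ref{stairdown}, injectivity from an intrinsic characterization of lattice data) matches the paper's, but the injectivity step as you propose it has a genuine gap, and it is exactly the step you flag as the ``main obstacle.'' You want to recover the ordered basis $v_1=(x_1-x_2,f(x_1))$, $v_2=(x_1,f(x_2))$ from the lattice alone by confining each generator to a slab in the first coordinate. As stated this does not single them out: $-v_2=(-x_1,-f(x_2))$ and $v_1-v_2=(-x_2,f(x_1)-f(x_2))$ also have first coordinate in $[-1,0)$, and $v_1+v_2=(2x_1-x_2,f(x_1)+f(x_2))$ has first coordinate in $[0,x_1]$, which can be arbitrarily small; the ``thresholds'' you invoke for the second coordinate depend on the unknown pair $(x_1,x_2)$ and hence are not intrinsic to the lattice. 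Your fallback via ``edge vectors of the hexagonal tiling'' only identifies the six-element set $\{\pm v_1,\pm v_2,\pm(v_1+v_2)\}$, not which element is $v_2$, so the conclusion ``$x_1=x_1'$ and $x_2=x_2'$ immediately from the first coordinates'' is not yet available. Until one of the two generators is pinned down by a property that refers only to the lattice and to fixed data, the argument does not close.

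The paper closes it with a weaker but genuinely intrinsic characterization: using Lemma~\ref{property of p1p2} (namely $x_1+x_2\geq 1$, $f(x_1)+f(x_2)\geq 1$, $2x_1\geq x_2$, $2f(x_2)\geq f(x_1)$) one shows that $\Lambda_f(x_1,x_2)\cap Int(K_f)=\{(x_1,f(x_2))\}$. Since $K_f$ is a fixed body, equal lattices must have the same unique interior point, which yields $x_1=x_1'$ and $f(x_2)=f(x_2')$ --- but not $x_2=x_2'$, since $f$ may be locally constant. That residual ambiguity is then killed not by identifying $v_1$ but by an area argument: if $x_2<x_2'$ with $x_1=x_1'$ and $f(x_2)=f(x_2')$, then $S_f(x_1,x_2)\subsetneq S_f(x_1',x_2')$, so their areas cannot both equal $A_f$. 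I would recommend replacing your slab characterization with this ``unique lattice point in $Int(K_f)$'' argument plus the area comparison; alternatively, if you insist on recovering both generators, you must supply a sign and ordering convention that distinguishes $v_2$ inside the edge-vector set, which is more work than the problem requires.
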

\begin{proof}
Let $(x_1,x_2),(x'_1,x'_2)\in\mathcal{X}_{f}$.
Since $K_f$ is not a square, one can show that
$$(x_1,f(x_2))\in Int(K_f) ~\text{and}~ (x'_1,f(x'_2))\in Int(K_f).$$
By Lemma \ref{property of p1p2}, we know that
$$x_1+x_2\geq 1~,~f(x_1)+f(x_2)\geq 1$$
and
$$2x_1\geq x_2~,~2f(x_2)\geq f(x_1).$$
This can be deduced that (see Figure \ref{latticepoint})
$$\Lambda_{f}(x_1,x_2)\cap Int(K_f)=\{(x_1,f(x_2))\}.$$
By the same reason, we have
$$\Lambda_{f}(x'_1,x'_2)\cap Int(K_f)=\{(x'_1,f(x'_2))\}.$$
Now suppose that $(x'_1,x'_2) \neq (x_1,x_2)$. if $(x_1,f(x_2))\neq (x'_1,f(x'_2))$, then we have $\Lambda_{f}(x_1,x_2)\neq\Lambda_{f}(x'_1,x'_2)$. if $(x_1,f(x_2))= (x'_1,f(x'_2))$, then we may assume, without loss of generality, that $x_2<x'_2$, i.e., $S_{f}(x_1,x_2)\subsetneq S_{f}(x'_1,x'_2)$, and hence $|S_{f}(x_1,x_2)|< |S_{f}(x'_1,x'_2)|$. This is impossible. It follows that $\Lambda_{f}$ is an injection.
\end{proof}

\begin{figure}[!ht]
  \centering
    \includegraphics[scale=.8]{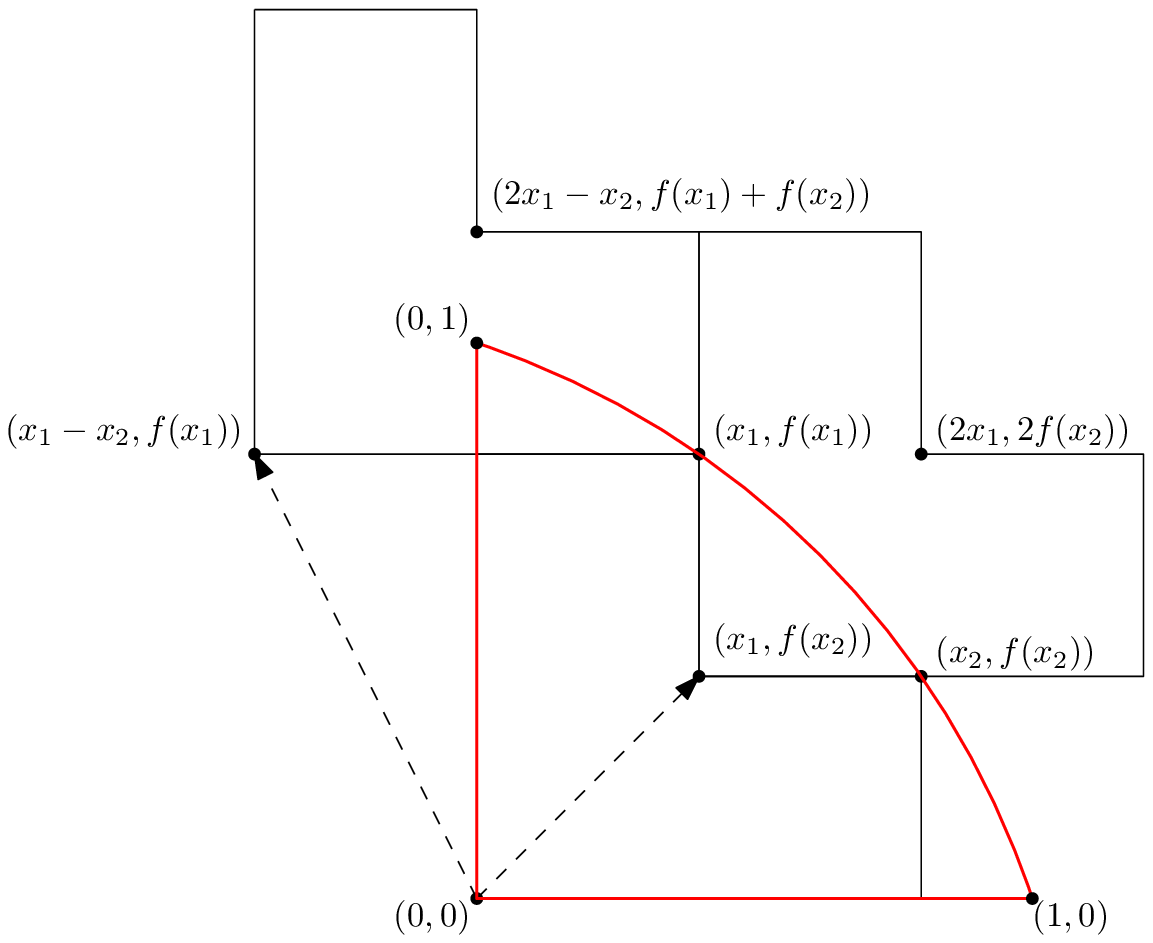}
   \caption{$\Lambda_{f}(x_1,x_2)\cap Int(K_f)=\{(x_1,f(x_2))\}$}\label{latticepoint}
\end{figure}

Now we determine $\delta_L(K_f)$. By the definition of $S^{f}(x)$, it is clear that $S^{f}(x)+\mathcal{L}((x-1,1),(x,f(x)))$ is a tiling of $\mathbb{R}^2$. Define $\Lambda^{f}$ by
\begin{equation}\label{lambdauplattice}
\Lambda^{f}(x)=\mathcal{L}((x-1,1),(x,f(x))).
\end{equation}

Since $K_f\subset S^{f}(x)$, the family $K_f+\Lambda^{f}(x)$ is a packing of $\mathbb{R}^2$ with density $\frac{|K_f|}{|S^{f}(x)|}$. We thus have
$$\delta_L(K_f)\geq \frac{|K_f|}{|S^{f}(x)|}.$$
Therefore
$$\delta_L(K_f)\geq \frac{|K_f|}{A^{f}}.$$
On the other hand, it is obvious that for any translative packing $\{K_f+u_i\}$ with copies of $K_f$, there is at most one $j$ such that $K_f+u_j$ may intersect $(U\setminus K_f)+u_i$, for each $i$, where $U=(0,1)\times (0,1)$. This can be deduced that
$$\delta_T(K_f)\leq \frac{|K_f|}{A^{f}}.$$
We have thus proved

\begin{thm}\label{deltawitharea}
$\delta_L(K_f)=\delta_T(K_f)=\frac{|K_f|}{A^{f}}$.
\end{thm}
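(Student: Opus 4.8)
The plan is to prove the two inequalities $\delta_L(K_f)\ge |K_f|/A^{f}$ and $\delta_T(K_f)\le |K_f|/A^{f}$ separately; since every lattice packing is in particular a translative packing we have $\delta_L(K_f)\le\delta_T(K_f)$, so these two bounds force $\delta_L(K_f)=\delta_T(K_f)=|K_f|/A^{f}$.

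For the lower bound I would argue exactly as with the covering lattices $\Lambda_f(x_1,x_2)$ treated earlier in this section. Fix any $x\in[0,1]$. From the definition of $S^{f}(x)$ one checks directly that $S^{f}(x)+\Lambda^{f}(x)$ is a tiling of $\mathbb{R}^2$, where $\Lambda^{f}(x)=\mathcal{L}((x-1,1),(x,f(x)))$; in particular $\det\Lambda^{f}(x)=|S^{f}(x)|$. Since $K_f\subset S^{f}(x)$, the family $K_f+\Lambda^{f}(x)$ is a lattice packing of $\mathbb{R}^2$ whose density equals $|K_f|/\det\Lambda^{f}(x)=|K_f|/|S^{f}(x)|$. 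Hence $\delta_L(K_f)\ge |K_f|/|S^{f}(x)|$ for every $x\in[0,1]$. The function $x\mapsto|S^{f}(x)|$ is continuous on the compact interval $[0,1]$, so it attains its minimum $A^{f}$ at some $x_0\in\mathcal{X}^{f}$; taking $x=x_0$ gives $\delta_L(K_f)\ge |K_f|/A^{f}$.

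For the upper bound on $\delta_T(K_f)$, let $\{K_f+u_i\}_i$ be an arbitrary translative packing of $\mathbb{R}^2$ and put $U=(0,1)\times(0,1)$, so that $K_f\subseteq\overline U$ and $U\setminus K_f$ is the part of the open unit square lying above the curve $C_f$. The key geometric observation is that, for each index $i$, at most one translate $K_f+u_j$ of the packing meets $(U\setminus K_f)+u_i$ — a consequence of the fact that $f$ is convex and non-increasing with $f(0)=1$, which makes the \emph{roof region} $U\setminus K_f$ too thin to accommodate the protrusions of two interior-disjoint copies of $K_f$. Granting this, one runs the standard absorption argument: working inside a disk $rB^2$, to each copy $K_f+u_i$ meeting $rB^2$ one attaches the \emph{staircase envelope} $S^{f}(x_0)+u_i$ (of area $A^{f}$), uses the observation to control the pairwise overlaps among these enlarged regions, and concludes that the number of copies meeting $rB^2$ is at most $|rB^2|/A^{f}+O(r)$. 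Dividing by $|rB^2|$ and letting $r\to\infty$ yields $d_+(\{K_f+u_i\})\le |K_f|/A^{f}$, and passing to the supremum over all translative packings gives $\delta_T(K_f)\le |K_f|/A^{f}$.

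The main obstacle is the upper bound, and within it the precise form of the absorption step: one has to turn the qualitative ``at most one intruding copy'' statement into a clean area inequality showing that the enlarged regions $S^{f}(x_0)+u_i$ can be trimmed to interior-disjoint pieces of area at least $A^{f}$ each — equivalently, that redistributing each roof $(U\setminus K_f)+u_i$ between the copy $K_f+u_i$ and its single intruding neighbour never costs a copy more room than it can afford — all while keeping the $O(r)$ boundary contributions of a disk of radius $r$ under control. The lower bound, by contrast, is a routine verification once the tiling property of $S^{f}(x)+\Lambda^{f}(x)$ is granted.
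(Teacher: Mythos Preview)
Your proposal follows the paper's argument step for step: the lower bound via the explicit lattices $\Lambda^{f}(x)$ is identical, and for the upper bound you, like the paper, rest everything on the assertion that for each $i$ at most one other translate $K_f+u_j$ meets the roof $(U\setminus K_f)+u_i$. You go further than the paper --- which simply writes ``this can be deduced'' --- by sketching an absorption scheme with the envelopes $S^{f}(x_0)+u_i$ and, rightly, flag that redistribution step as the main obstacle.

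The deeper problem, however, is that the ``at most one translate in the roof'' assertion is \emph{false} for general convex $f$, so both the paper's sketch and your proposal have a genuine gap before the absorption step is even reached. Take $f(x)=(1-x)^{2}$ and the three copies $K_f$, $K_f+(0.1,\,0.81)$, $K_f+(0.9,\,0.9)$. They are pairwise interior-disjoint --- for instance, on the common strip $0.9\le s\le 1.1$ the second copy satisfies $t\le 0.81+(1.1-s)^{2}\le 0.85$ while the third satisfies $t\ge 0.9$, and disjointness from $K_f$ is immediate since each shifted corner sits on or above the curve --- yet both shifted copies meet the open roof $U\setminus K_f$ of the first. So your heuristic that convexity and monotonicity make the roof ``too thin to accommodate the protrusions of two interior-disjoint copies of $K_f$'' does not hold in general; it is valid for affine $f$ (the triangle), which is presumably what the author had in mind. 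One clean way to repair the upper bound is to invoke the classical identity $\delta_T(K)=\delta_L(K)$ for planar convex bodies (cited later in the paper) and then prove $\det\Lambda\ge A^{f}$ directly for lattice packings, where the lattice relations forbid configurations like the one above.
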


By this theorem, $\Lambda^{f}$ can be seen as a mapping from $\mathcal{X}^{f}$ to $\Delta(K_f)$. Furthermore, since $K_f$ is not a square, from the discussion above, one can easily show that $\Lambda^{f}$ is surjective. Now let $x,x'\in \mathcal{X}^{f}$. It is obvious that
$$\Lambda^{f}(x)\cap C_f=\{(x,f(x))\}$$
If $x\neq x'$, then $(x,f(x))\neq (x',f(x'))$. Therefore $\Lambda^{f}(x)\neq\Lambda^{f}(x')$. It follows that $\Lambda^{f}$ is injective. We immediately obtain

\begin{thm}\label{deltabijection}
$\Lambda^{f}$ is a bijection from $\mathcal{X}^{f}$ to $\Delta(K_f)$.
\end{thm}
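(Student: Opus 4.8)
My plan is to verify the three properties that together make $\Lambda^f$ a bijection: that $\Lambda^f$ actually lands in $\Delta(K_f)$, that it is surjective, and that it is injective. The first is immediate from the earlier results. If $x\in\mathcal{X}^f$ then $|S^f(x)|=A^f$, and since $S^f(x)+\Lambda^f(x)$ is a tiling of $\mathbb{R}^2$ with $K_f\subseteq S^f(x)$, the family $K_f+\Lambda^f(x)$ is a packing of density $|K_f|/|S^f(x)|=|K_f|/A^f=\delta_L(K_f)$, the last equality being Theorem~\ref{deltawitharea}. Hence $\Lambda^f(x)\in\Delta(K_f)$ and $\Lambda^f\colon\mathcal{X}^f\to\Delta(K_f)$ is well defined.

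For surjectivity, take $\Lambda\in\Delta(K_f)$, so that $\det\Lambda=|K_f|/\delta_L(K_f)=A^f$. I would revisit the argument in the proof of Theorem~\ref{deltawitharea} that bounds $\delta_T(K_f)$: in a packing $K_f+\Lambda$ the ``excess'' region $U\setminus K_f$ attached to each copy (with $U=(0,1)\times(0,1)$) is met by at most one other copy, and optimality forces this to be tight. Tracing through what tightness means for a \emph{lattice} packing, one finds that $\Lambda$ must be positioned so that $S^f(x)+\Lambda$ is a tiling of $\mathbb{R}^2$ for some admissible $x$; since the step-hexagon $S^f(x)$ admits (modulo the degeneracies excluded by ``$K_f$ is not a square'') only the one lattice tiling $\Lambda^f(x)$, we get $\Lambda=\Lambda^f(x)$, and then $|S^f(x)|=\det\Lambda=A^f$ shows $x\in\mathcal{X}^f$.

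For injectivity the key point is the identity $\Lambda^f(x)\cap C_f=\{(x,f(x))\}$ for every $x\in\mathcal{X}^f$. Since $K_f$ is not a square we have $A^f<1=|S^f(0)|=|S^f(1)|$, so $0<x<1$, and hence the generator $(x,f(x))$ of $\Lambda^f(x)$ lies on $C_f$. For the reverse inclusion, write a general lattice point as $a(x-1,1)+b(x,f(x))$ with $a,b\in\mathbb{Z}$; every point of $C_f$ has first coordinate in $(0,1]$ and second coordinate in $(0,1]$, and using $0<f(x)\le 1$, the concavity estimate $f(2x)\le 2f(x)-1$, and the fact that $K_f+\Lambda^f(x)$ is a packing, a finite case check on $(a,b)$ rules out every pair except $(0,1)$. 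Granting this, if $x\ne x'$ in $\mathcal{X}^f$ then $(x,f(x))\ne(x',f(x'))$ because their first coordinates differ, so $\Lambda^f(x)\cap C_f\ne\Lambda^f(x')\cap C_f$ and therefore $\Lambda^f(x)\ne\Lambda^f(x')$.

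I expect the main obstacle to be the surjectivity step: extracting the parameter $x$ from an arbitrary optimal packing lattice and confirming $\Lambda=\Lambda^f(x)$ requires analysing the equality case of the density inequality and the rigidity of tilings by $S^f(x)$, which is more than a formal manipulation; the $C_f$ identity used for injectivity also needs a careful (if routine) case analysis. Should the tiling rigidity prove awkward to state cleanly, a workable substitute is to read $x$ directly off the position of the (unique) lattice point of $\Lambda$ lying on the upper–right boundary curve $C_f$ of $K_f$ and then verify $\Lambda=\Lambda^f(x)$ by hand; the injectivity identity above would then fall out of the same bookkeeping.
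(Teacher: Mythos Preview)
Your outline matches the paper's proof exactly: well-definedness via Theorem~\ref{deltawitharea}, surjectivity from the equality analysis in the $\delta_T(K_f)\le |K_f|/A^f$ argument, and injectivity via the identity $\Lambda^f(x)\cap C_f=\{(x,f(x))\}$ (which the paper simply declares ``obvious''). One small slip: since $f$ is \emph{convex} with $f(0)=1$, the correct inequality is $f(2x)\ge 2f(x)-1$, not $\le$; this is what you actually need in the case check.
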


\section{Determining $\vartheta_L(x,y)$ and $\Theta(x,y)$}\label{thetadetermining}
 We assume that $(x,y)\in  D\setminus\{(1,1)\}$ (hence $K_{x,y}$ is not a unit square), and let

\begin{equation}\label{f}
\bar{f}(x')=
\begin{cases}
1-\frac{(1-y)x'}{x} &0\leq x'< x,\\
\frac{(1-x')y}{1-x}& x\leq x'\leq1,
\end{cases}
\end{equation}
Clearly, we have $K_{x,y}=K_{\bar{f}}$. Denote by $I_1$ and $I_2$ the intervals $[0,x]$ and $[x,1]$, respectively. For $i,j\in\{1,2\}$ and $i\leq j$, we define
$$A_{ij}(x,y)=\max_{x_1\in I_i, x_2\in I_j,x_1\leq x_2}|S_{\bar{f}}(x_1,x_2)|.$$
Let $A_*(x,y)=\max \{A_{12}(x,y), A_{11}(x,y), A_{22}(x,y)\}$. By Theorem \ref{transequallattice_witharea}, we have
\begin{equation}\label{relationship theta and s}
\vartheta_L(x,y)=\frac{|K_{x,y}|}{A_*(x,y)}=\frac{x+y}{2A_*(x,y)}.
\end{equation}
In order to determine $\Theta(x,y)$, we define
$$\tilde{\mathcal{X}}_{ij}(x,y)=\{(x_1,x_2) : x_1\in I_i, x_2\in I_j, x_1\leq x_2, |S_{\bar{f}}(x_1,x_2)|=A_{ij}(x,y)\},$$
and
$$\mathcal{X}_{ij}(x,y)=\{(x_1,x_2) : x_1\in I_i, x_2\in I_j, x_1\leq x_2, |S_{\bar{f}}(x_1,x_2)|=A_*(x,y)\},$$
where $i,j\in\{1,2\}$ and $i\leq j$.
Obviously, we have
\begin{equation}\label{p and p prime}
\mathcal{X}_{ij}(x,y)=
\begin{cases}
\tilde{\mathcal{X}}_{ij}(x,y) & A_{ij}(x,y)=A_*(x,y),\\
\emptyset & otherwise.
\end{cases}
\end{equation}
Let $\mathcal{X}_*(x,y)=\mathcal{X}_{12}(x,y)\cup\mathcal{X}_{11}(x,y)\cup \mathcal{X}_{22}(x,y)$. By Theorem \ref{thetabijection} and (\ref{lambdadownlattice}), we know that
\begin{equation}\label{theta and s}
\Theta(x,y)=\{\mathcal{L}((x_1-x_2,\bar{f}(x_1)),(x_1,\bar{f}(x_2))) : (x_1,x_2)\in \mathcal{X}_*(x,y)\}
\end{equation}

\subsection{$A_{12}(x,y)$ and $\tilde{\mathcal{X}}_{12}(x,y)$}

\begin{figure}[!ht]
  \centering
    \includegraphics[scale=.85]{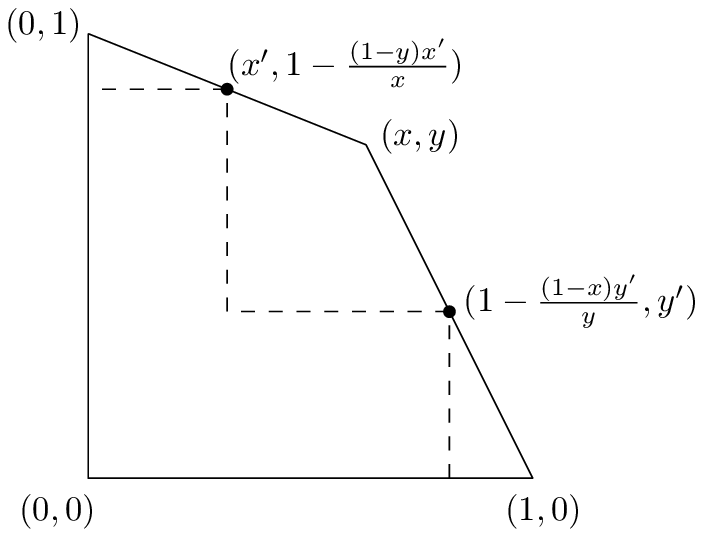}
    \caption{$S_{\bar{f}}\left(x',1-\frac{(1-x)y'}{y}\right)$}\label{a12}
\end{figure}

 Let $G(x',y')$ denote the area of $S_{\bar{f}}(x',1-\frac{(1-x)y'}{y})$, where $0\leq x'\leq x$ and $0\leq y'\leq y$ (see Figure \ref{a12}). Then we get

$$A_{12}(x,y)=\max_{0\leq x'\leq x,0
\leq y'\leq y} G(x',y').$$
By computations, we can obtain
\begin{equation}\label{function G}
G(x',y')=-\frac{(1-y)x'^2}{x}-\frac{(1-x)y'^2}{y}+x'+y'-x'y',
\end{equation}
where $0\leq x'\leq x$ and $0\leq y' \leq y$. Obviously, $G$ is a convex quadratic function with respect to $x'$ (or $y'$).
When $y'$ is fixed, by considering the critical point, it is easy to see that $G(x',y')$ reaches its maximum value at
\begin{equation}\label{x max G}
x'=
\begin{cases}
x & 0\leq y' \leq 2y-1,\\
\frac{x(1-y')}{2(1-y)} & 2y-1< y' \leq y.
\end{cases}
\end{equation}
Here we note that, when $y<1$, we have $\frac{1-y'}{2(1-y)}\leq 1$ is equivalent to $2y-1\leq y'$.
By substituting (\ref{x max G}) into (\ref{function G}), we obtain
\begin{equation}\label{g}
g(y')=
\begin{cases}
-\frac{(1-x)y'^2}{y}+(1-x)y'+xy & 0\leq y' \leq 2y-1,\\
\frac{1}{2}\left(\frac{x}{2(1-y)}-\frac{2(1-x)}{y}\right)y'^2+\left(1-\frac{x}{2(1-y)}\right)y' +\frac{x}{4(1-y)} & 2y-1< y' \leq y,
\end{cases}
\end{equation}
and hence
$$A_{12}(x,y)= \max_{0\leq y'\leq y} g(y').$$
The derivative of $g$ with respect to $y'$ is
\begin{equation}\label{derivative g}
\frac{dg}{dy'}=
\begin{cases}
-\frac{2(1-x)y'}{y}+(1-x) & 0< y'< 2y-1,\\
\left(\frac{x}{2(1-y)}-\frac{2(1-x)}{y}\right)y'+1-\frac{x}{2(1-y)} & 2y-1 < y' < y.
\end{cases}
\end{equation}

\begin{rem} \label{property of g}
Obviously, the function $g$ is a convex quadratic function on $[0,2y-1]$, and if $\frac{y}{2}\leq 2y-1$ (i.e., $y\geq \frac{2}{3}$), then $g$ reaches its maximum value on $[0,2y-1]$ at $y'=\frac{y}{2}$.
\end{rem}

Now we divide $D\setminus\{(1,1)\}$ into six sets
\begin{align*}
D_1=&\{(x,y)\in D : 4(1-x)(1-y)-xy< 0,~x<y\},\\
D_2=&\{(x,y)\in D : y\geq \frac{2}{3},~4(1-x)(1-y)-xy> 0\},\\
D_3=&\{(x,y)\in D : y < \frac{2}{3}\},\\
D_4=&\{(x,y)\in D : 4(1-x)(1-y)-xy=0,~x<y\},\\
D_5=&\{(x,y)\in D : \frac{2}{3}<x=y<1\},\\
D_6=&\{(\frac{2}{3},\frac{2}{3})\}.
\end{align*}

\begin{figure}[!ht]
  \centering
    \includegraphics[scale=1]{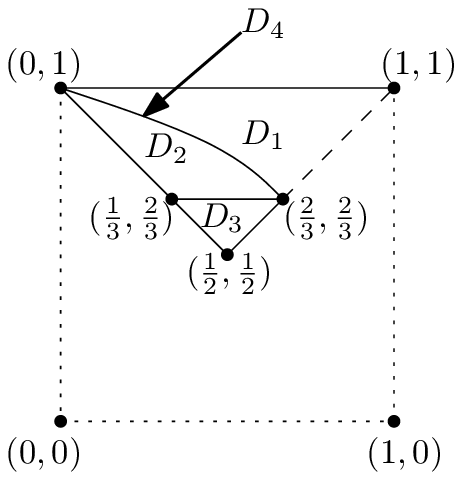}
    \caption{$D_i$}
\end{figure}

\begin{rem}\label{identities}
By computations, we can obtain the following identities
\begin{itemize}
\item[(a)] $4(1-x)(1-y)-xy=3\left(y-\frac{4}{3}\right)\left(x-\frac{4}{3}\right)-\frac{4}{3}$.
\item[(b)] $(2(1-y)-x)-(4(1-x)(1-y)-xy)=(1-y)(3x-2)$.
\item[(c)] $y(2(1-y)-x)-(2y-1)(4(1-x)(1-y)-xy)=2(1-x)(1-y)(2-3y)$.
\end{itemize}
\end{rem}

\begin{enumerate}
\item[\textbf{Case 1}] $(x,y)\in D_1$. Since $4(1-x)(1-y)-xy<0$, $g$ is a concave quadratic function on $[2y-1,y]$. Therefore, the function $g$ reaches its maximum value on  $[2y-1,y]$ at $y'=2y-1$ or $y'=y$. On the other hand, since $y\geq \frac{2}{3}$, from Remark \ref{property of g}, we know that $g$ reaches its maximum value on $[0,2y-1]$ at $y'=\frac{y}{2}$. From (\ref{g}), we obtain
    $$g\left(\frac{y}{2}\right)=\frac{y(1+3x)}{4} \text{~~and~~} g(y)=\frac{x(1+3y)}{4}.$$
    Since $x< y$, we have $g(y)< g(\frac{y}{2})$. Therefore $g$ reaches its maximum value on $[0,y]$ at $y'=\frac{y}{2}$.
    Hence, we know that
    $$A_{12}(x,y)=\frac{y(1+3x)}{4}.$$
    Furthermore, from (\ref{x max G}), we immediately get (as in Figure \ref{a12}, let $x'=x$ and $y'=\frac{y}{2}$)
    $$\tilde{\mathcal{X}}_{12}(x,y)=\left\{\left(x,\frac{1+x}{2}\right)\right\}.$$
\item[\textbf{Case 2}] $(x,y)\in D_2$. Since
$$4(1-x)(1-y)-xy>0 \text{~~and~~} y\geq\frac{2}{3},$$
from Remark \ref{identities} (c), we can easily obtain
\begin{equation}\label{critical point}
\frac{y(2(1-y)-x)}{4(1-x)(1-y)-xy}\leq2y-1.
\end{equation}
On the other hand, $g$ is convex on $[2y-1,y]$, since $4(1-x)(1-y)-xy>0$. It immediately follows from (\ref{derivative g}) and (\ref{critical point}) that $g$ reaches it maximum value on $[2y-1,y]$ at $y'=2y-1$. From Remark \ref{property of g}, we know that $g$ reaches its maximum value on $[0,y]$ at $y'=\frac{y}{2}$. Therefore, we obtain
$$A_{12}(x,y)=\frac{y(1+3x)}{4}$$
and
$$\tilde{\mathcal{X}}_{12}(x,y)=\left\{\left(x,\frac{1+x}{2}\right)\right\}.$$
\item[\textbf{Case 3}] $(x,y)\in D_3$. In this case, one can see that $4(1-x)(1-y)-xy>0$ and $x<\frac{2}{3}$. From Remark \ref{identities} (b), we have
    \begin{equation}\label{in case 3}
    \frac{2(1-y)-x}{4(1-x)(1-y)-xy}<1.
    \end{equation}
    Since $y<\frac{2}{3}$, it follows from Remark \ref{identities} (c) and (\ref{in case 3}) that
    $$2y-1<\frac{y(2(1-y)-x)}{4(1-x)(1-y)-xy}<y.$$
    From (\ref{derivative g}), we know that $g$ reaches its maximum value on $[2y-1,y]$ at
    $$y'=\frac{y(2(1-y)-x)}{4(1-x)(1-y)-xy}.$$
    On the other hand, we have $2y-1<\frac{y}{2}$, since $y<\frac{2}{3}$. Hence, $g$ reaches its maximum value on $[0,2y-1]$ at $y'=2y-1$. This immediately implies that $g$ reaches its maximum value at
    $$y'=\frac{y(2(1-y)-x)}{4(1-x)(1-y)-xy}.$$
    Hence, we have
    $$A_{12}(x,y)=g\left(\frac{y(2(1-y)-x)}{4(1-x)(1-y)-xy}\right)=\frac{x(1-x)+y(1-y)-xy}{4(1-x)(1-y)-xy},$$
    and
    \begin{equation*}
    \begin{split}
    \tilde{\mathcal{X}}_{12}(x,y)=\left\{\left(\frac{x(2(1-x)-y)}{4(1-x)(1-y)-xy},\frac{(2-x)(1-y)-x^2}{4(1-x)(1-y)-xy}\right)\right\}
    \end{split}
    \end{equation*}
\item[\textbf{Case 4}] $(x,y)\in D_4$. In this case, we have $4(1-x)(1-y)-xy=0$, and $x<y$. Hence $g$ is a non-constant linear function on $[2y-1,y]$. Therefore, $g$ reaches its maximum value on $[2y-1,y]$ at $y'=2y-1$ or $y$. By the same argument as case 1, we have
     $$A_{12}(x,y)=\frac{y(1+3x)}{4}.$$
    and
    $$\tilde{\mathcal{X}}_{12}(x,y)=\left\{\left(x,\frac{1+x}{2}\right)\right\}.$$
\item[\textbf{Case 5}] $(x,y)\in D_5$, i.e., $\frac{2}{3}<x=y<1$. Since $4(1-x)(1-y)-xy<0$, $g$ is a concave quadratic function on $[2y-1,y]$. Therefore, the function $g$ reaches its maximum value on $[2y-1,y]$ at $y'=2y-1$ or $y'=y$. On the other hand, since $y>\frac{2}{3}$, the function $g$ reaches its maximum value on $[0,2y-1]$ at $y'=\frac{y}{2}$. Note that
    $$g(y)=\frac{x(1+3y)}{4}=\frac{y(1+3x)}{4}=g\left(\frac{y}{2}\right),$$
    since $x=y$. Therefore $g$ reaches its maximum value on $[0,y]$ at $y'=\frac{y}{2}$ and $y'=y$. Hence we obtain
    $$A_{12}(y,y)=\frac{y(1+3y)}{4}$$
    and
    $$\tilde{\mathcal{X}}_{12}(y,y)=\left\{\left(y,\frac{1+y}{2}\right),\left(\frac{y}{2},y\right)\right\}.$$
\item[\textbf{Case 6}] $(x,y)\in D_6,$ i.e.,$x=y=\frac{2}{3}$. From (\ref{g}), we have
    \begin{equation*}
    g(y')=
    \begin{cases}
    -\frac{y'^2}{2}+\frac{y'}{3}+\frac{4}{9} & 0\leq y' \leq \frac{1}{3},\\
     \frac{1}{2}& \frac{1}{3}\leq y' \leq \frac{2}{3}.
    \end{cases}
    \end{equation*}
    Hence we get
    $$A_{12}\left(\frac{2}{3},\frac{2}{3}\right)=\frac{1}{2}$$
    and
    $$\tilde{\mathcal{X}}_{12}\left(\frac{2}{3},\frac{2}{3}\right)
    =\left\{\left(1-y',1-\frac{y'}{2}\right) : \frac{1}{3}\leq y'\leq\frac{2}{3}\right\}$$
\end{enumerate}

In summary, we have
\begin{equation}\label{s1}
A_{12}(x,y)=
\begin{cases}
\frac{y(1+3x)}{4} & y\geq \frac{2}{3},\\
\frac{x(1-x)+y(1-y)-xy}{4(1-x)(1-y)-xy} & y<\frac{2}{3},
\end{cases}
\end{equation}
and
\begin{equation} \label{p prime 1}
\tilde{\mathcal{X}}_{12}(x,y)=
\begin{cases}
\left\{\left(x,\frac{1+x}{2}\right)\right\} & \frac{2}{3}\leq y,x<y,\\
\left\{\left(\frac{x(2(1-x)-y)}{4(1-x)(1-y)-xy},\frac{(2-x)(1-y)-x^2}{4(1-x)(1-y)-xy}\right)\right\} & y<\frac{2}{3},\\
\left\{\left(y,\frac{1+y}{2}\right),\left(\frac{y}{2},y\right)\right\}
& \frac{2}{3}<x=y< 1,\\
\left\{\left(1-t,1-\frac{t}{2}\right) : \frac{1}{3}\leq t\leq\frac{2}{3}\right\} & x=y=\frac{2}{3}.
\end{cases}
\end{equation}
where $(x,y)\in D\setminus\{(1,1)\}$.

\subsection{$A_{22}(x,y)$ and $\tilde{\mathcal{X}}_{22}(x,y)$}
\begin{figure}[!ht]
  \centering
    \includegraphics[scale=1]{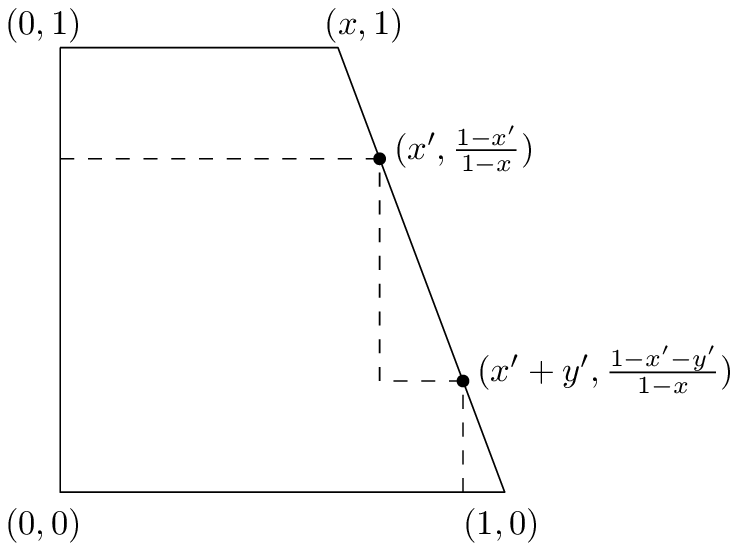}
    \caption{$S_{\bar{f}}\left(x',x'+y'\right)$}\label{a22}
\end{figure}

We first determine the case $y=1$ (and $x\neq 1$). Let $G_2(x',y')$ be the area of $S_{\bar{f}}(x',x'+y')$ (as in Figure \ref{a22}), where $x\leq x'\leq 1$ and $0\leq y'\leq 1-x'$, then
$$
G_2(x',y')= \frac{1-x'^2}{2(1-x)}-\frac{1}{2(1-x)}\left(y'^2+(1-x'-y')^2\right).
$$
When $x'$ is fixed, by Arithmetic Mean-Quadratic Mean inequality, we know that $G_2(x',y')$ reaches its maximum value at
\begin{equation}\label{y max G 2}
y'=\frac{1-x'}{2}.
\end{equation}

Let
$$
g_2(x')=G_2\left(x',\frac{1-x'}{2}\right)=\frac{(1+3x')(1-x')}{4(1-x)}.
$$
Obviously, $g_2$ is a convex quadratic function and it reaches its maximum value on $[x,1]$ at
\begin{equation}\label{x max G 2}
x'=
\begin{cases}
\frac{1}{3} & 0\leq x\leq \frac{1}{3},\\
x & \frac{1}{3}\leq x<1.
\end{cases}
\end{equation}
and hence
$$
A_{22}(x,1)=
\begin{cases}
\frac{1}{3(1-x)} & 0 \leq x\leq \frac{1}{3},\\
\frac{1+3x}{4} & \frac{1}{3}\leq x<1.
\end{cases}
$$
Furthermore, From (\ref{y max G 2}) and (\ref{x max G 2}), we obtain (see Figure \ref{a22})
$$\tilde{\mathcal{X}}_{22}(x,1)=
\begin{cases}
\left\{\left(\frac{1}{3},\frac{2}{3}\right)\right\} &0 \leq x\leq \frac{1}{3},\\
\left\{\left(x,\frac{1+x}{2}\right)\right\} & \frac{1}{3}\leq x<1.
\end{cases}
$$

\begin{figure}[!ht]
  \centering
    \includegraphics[scale=.8]{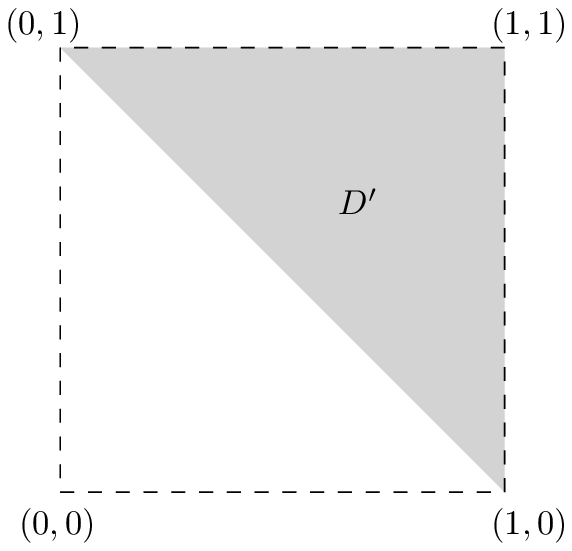}
    \caption{$D'$}
\end{figure}

In general case, we denote $D'=\{(x,y) : 0\leq x \leq 1,~0\leq y \leq 1,~x+y\geq 1\}$. Let $(x,y)\in D'$ and $y\neq 0$. By using affine transformation ($\tau :\mathbb{R}^2\rightarrow\mathbb{R}^2, \tau(s,t)= (s,\frac{t}{y})$), one can show that
$$A_{22}(x,y)=yA_{22}(x,1),$$
and $\tilde{\mathcal{X}}_{22}(x,y)=\tilde{\mathcal{X}}_{22}(x,1)$.
Therefore,
\begin{equation}\label{s2}
A_{22}(x,y)=
\begin{cases}
\frac{y}{3(1-x)} & x\leq \frac{1}{3},\\
\frac{y(1+3x)}{4} & x\geq \frac{1}{3}.
\end{cases}
\end{equation}
and
\begin{equation}\label{p prime 2}
\tilde{\mathcal{X}}_{22}(x,y)=
\begin{cases}
\left\{\left(\frac{1}{3},\frac{2}{3}\right)\right\} & x\leq \frac{1}{3},\\
\left\{\left(x,\frac{1+x}{2}\right)\right\} & x\geq \frac{1}{3},
\end{cases}
\end{equation}
where $(x,y)\in D'\setminus\{(1,y') : 0\leq y'\leq 1\}$.

\begin{figure}[!ht]
  \centering
    \includegraphics[scale=1]{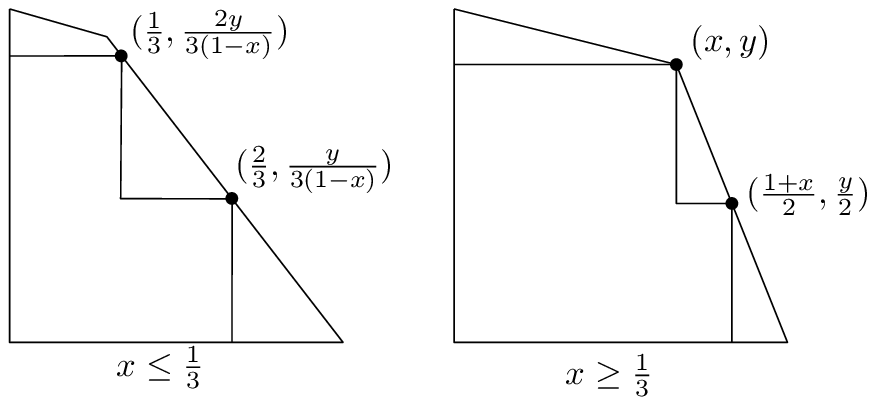}
    \caption{$\tilde{\mathcal{X}}_{22}(x,y)$}\label{x22}
\end{figure}

\subsection{$A_{11}(x,y)$ and $\tilde{\mathcal{X}}_{11}(x,y)$}
When $y=1$, one can easily obtain
$$
A_{11}(x,1)=x,
$$
and
$$\tilde{\mathcal{X}}_{11}(x,1)=\{(x',x) : 0\leq x'\leq x\}.$$
When $y\neq 1$. From (\ref{s2}), (\ref{p prime 2}) and Figure \ref{x22}, by swapping $x$ with $y$, it is easy to see that
\begin{equation*}
A_{11}(x,y)=
\begin{cases}
\frac{x}{3(1-y)} & 0\leq y\leq \frac{1}{3},\\
\frac{x(1+3y)}{4} & \frac{1}{3}\leq y<1,
\end{cases}
\end{equation*}
and
\begin{equation*}
\tilde{\mathcal{X}}_{11}(x,y)=
\begin{cases}
\left\{\left(\frac{x}{3(1-y)},\frac{2x}{3(1-y)}\right)\right\} & 0\leq y\leq \frac{1}{3},\\
\left\{\left(\frac{x}{2},x\right)\right\} & \frac{1}{3}\leq y<1.
\end{cases}
\end{equation*}
Therefore, for $(x,y)\in D'$, we have
\begin{equation}\label{s3}
A_{11}(x,y)=
\begin{cases}
\frac{x}{3(1-y)} & y\leq \frac{1}{3},\\
\frac{x(1+3y)}{4} & y\geq \frac{1}{3},
\end{cases}
\end{equation}
and
\begin{equation}\label{p prime 3}
\tilde{\mathcal{X}}_{11}(x,y)=
\begin{cases}
\left\{\left(\frac{x}{3(1-y)},\frac{2x}{3(1-y)}\right)\right\} & 0\leq y\leq \frac{1}{3},\\
\left\{\left(\frac{x}{2},x\right)\right\} & \frac{1}{3}\leq y<1,\\
\{(t,x) : 0\leq t\leq x\} & y=1¡£
\end{cases}
\end{equation}

\subsection{Determining $A_*(x,y)$}\label{Adownsubsec}
Recall that $A_*(x,y)=\max\{A_{12}(x,y),A_{11}(x,y),A_{22}(x,y)\}$. We divide $D$ into three sets
\begin{align*}
B_1= &\{(x,y)\in D : x\leq \frac{1}{3}\},\\
B_2= &\{(x,y)\in D : \frac{1}{3}< x\leq1,~y\geq \frac{2}{3}\},\\
B_3= &\{(x,y)\in D : y< \frac{2}{3}\}.
\end{align*}

\begin{figure}[!ht]
  \centering
    \includegraphics[scale=1]{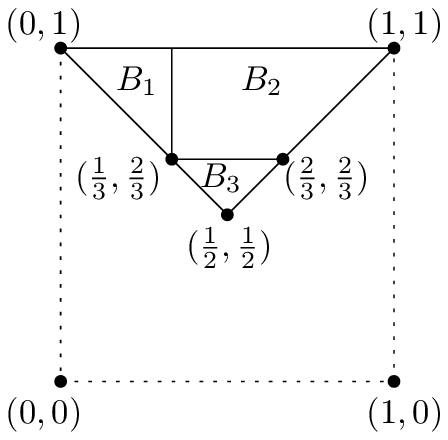}
    \caption{$B_1,B_2$ and $B_3$}
\end{figure}
\begin{enumerate}
\item[\textbf{Case 1}] $(x,y)\in B_1$. We have
$$A_*(x,y)=\max \left\{\frac{y(1+3x)}{4},\frac{x(1+3y)}{4},\frac{y}{3(1-x)}\right\}$$
Since $0\leq x< y\leq 1$, one can show that
\begin{equation}\label{case1}
\frac{x(1+3y)}{4}< \frac{y(1+3x)}{4}\leq \frac{y}{3(1-x)},
\end{equation}
with $\frac{y(1+3x)}{4}= \frac{y}{3(1-x)}$ only when $x=\frac{1}{3}$. Therefore
\begin{equation}\label{s case 1}
A_*(x,y)=\frac{y}{3(1-x)}.
\end{equation}

\item[\textbf{Case 2}] $(x,y)\in B_2$. Note that in this case, we have
$$A_{12}(x,y)=A_{22}(x,y)=\frac{y(1+3x)}{4},$$
thus
\begin{equation}\label{s case 2}
A_*(x,y)=\max \left\{\frac{y(1+3x)}{4},\frac{x(1+3y)}{4}\right\}=\frac{y(1+3x)}{4}.
\end{equation}

\item[\textbf{Case 3}] $(x,y)\in B_3$. We have
$$A_*(x,y)=\max \left\{\frac{x(1-x)+y(1-y)-xy}{4(1-x)(1-y)-xy},\frac{x(1+3y)}{4},\frac{y(1+3x)}{4}\right\}.
$$
Note that
$$4(x(1-x)+y(1-y)-xy)-y(1+3x)(4(1-x)(1-y)-xy)=x(1-x)(3y-2)^2,$$
we thus have
\begin{equation}\label{inequality case 3}
\frac{x(1-x)+y(1-y)-xy}{4(1-x)(1-y)-xy}\geq \frac{y(1+3x)}{4},
\end{equation}
with equality only when $y=\frac{2}{3}$.
Hence
\begin{equation}\label{s case 3}
A_*(x,y)=\frac{x(1-x)+y(1-y)-xy}{4(1-x)(1-y)-xy}
\end{equation}
\end{enumerate}

By combining (\ref{s case 1}), (\ref{s case 2}), (\ref{s case 3}) and (\ref{relationship theta and s}), we have thus proved the second part of Theorem \ref{main theorem}.

\subsection{Determining $\mathcal{X}_*(x,y)$ and $\Theta(x,y)$}
To determine $\mathcal{X}_*(x,y)$, we divide $D\setminus\{(1,1)\}$ into five sets
\begin{align*}
E_1=&\{(x,y)\in D : x\leq \frac{1}{3}\},\\
E_2=&\{(x,y)\in D : \frac{1}{3}< x<y,~\frac{2}{3}\leq y\},\\
E_3=&\{(x,y)\in D : y < \frac{2}{3}\},\\
E_4=&\{(x,y)\in D : \frac{2}{3}<x=y<1\},\\
E_5=&\left\{\left(\frac{2}{3},\frac{2}{3}\right)\right\}.
\end{align*}

\begin{enumerate}
\item[\textbf{Case 1}] $(x,y)\in E_1$. From (\ref{case1}) and (\ref{s case 1}), we know that
$$A_{11}(x,y)<A_{12}(x,y)\leq A_{22}(x,y)=A_*(x,y),$$
Moreover, we have
$$A_{12}(x,y)= A_{22}(x,y)$$
only when $x=\frac{1}{3}$. From (\ref{p prime 1}),(\ref{p prime 2}) and (\ref{p and p prime}), one can obtain
\begin{align*}
\mathcal{X}_{12}(x,y)&=
\begin{cases}
\tilde{\mathcal{X}}_{12}(x,y) &x=\frac{1}{3},\\
\emptyset & otherwise,
\end{cases}
\\
&=
\begin{cases}
\left\{\left(\frac{1}{3},\frac{2}{3}\right)\right\} &x=\frac{1}{3},\\
\emptyset & otherwise,
\end{cases}
\end{align*}
$$\mathcal{X}_{22}(x,y)=\tilde{\mathcal{X}}_{22}(x,y)=\left\{\left(\frac{1}{3},\frac{2}{3}\right)\right\}$$
and
$$\mathcal{X}_{11}(x,y)=\emptyset.$$
Hence
\begin{equation}\label{p case 1}
\mathcal{X}_*(x,y)
=\left\{\left(\frac{1}{3},\frac{2}{3}\right)\right\}
\end{equation}
\item[\textbf{Case 2}] $(x,y)\in E_2$. From (\ref{s1}), (\ref{s2}), (\ref{s3}) and (\ref{s case 2}), one can see that
    $$A_{12}(x,y)=A_{22}(x,y)=A_*(x,y)$$
    and
    $$A_{11}(x,y)<A_*(x,y).$$
   From (\ref{p prime 1}),(\ref{p prime 2}) and (\ref{p and p prime}), we have
    $$\mathcal{X}_{12}(x,y)=\tilde{\mathcal{X}}_{12}(x,y)=\left\{\left(x,\frac{1+x}{2}\right)\right\},$$
    $$\mathcal{X}_{22}(x,y)=\tilde{\mathcal{X}}_{22}(x,y)=\left\{\left(x,\frac{1+x}{2}\right)\right\},$$
    and
    $$\mathcal{X}_{11}(x,y)=\emptyset.$$
    Thus,
    \begin{equation}\label{p case 2}
    \mathcal{X}_*(x,y)=\left\{\left(x,\frac{1+x}{2}\right)\right\}
    \end{equation}
\item[\textbf{Case 3}] $(x,y)\in E_3$. From (\ref{s1}), (\ref{s2}), (\ref{s3}), (\ref{inequality case 3}) and (\ref{s case 3}), we obtain
    $$A_{11}(x,y)\leq A_{22}(x,y)<A_{12}(x,y)=A_*(x,y).$$
    From (\ref{p prime 1}),(\ref{p prime 2}) and (\ref{p and p prime}), we get
    \begin{align*}
    \mathcal{X}_{12}(x,y)&=\tilde{\mathcal{X}}_{12}(x,y)\\
    &=\left\{\left(\frac{x(2(1-x)-y)}{4(1-x)(1-y)-xy},\frac{(2-x)(1-y)-x^2}{4(1-x)(1-y)-xy}\right)\right\} \end{align*}
    and
    $$\mathcal{X}_{22}(x,y)=\mathcal{X}_{11}(x,y)=\emptyset.$$
    Therefore
    \begin{equation}\label{p case 3}
    \begin{split}
    \mathcal{X}_*(x,y)=\left\{\left(\frac{x(2(1-x)-y)}{4(1-x)(1-y)-xy},\frac{(2-x)(1-y)-x^2}{4(1-x)(1-y)-xy}\right)\right\}  \end{split}
    \end{equation}
\item[\textbf{Case 4}] $(x,y)\in E_4$.  From (\ref{s1}), (\ref{s2}), (\ref{s3}) and (\ref{s case 2}), since $x=y$, we have
    $$A_{12}(y,y)=A_{22}(y,y)=A_{11}(y,y)=A_*(y,y).$$
    From (\ref{p prime 1}),(\ref{p prime 2}), (\ref{p prime 3}) and (\ref{p and p prime}), we obtain
    $$\mathcal{X}_{12}(y,y)=\tilde{\mathcal{X}}_{12}(y,y)=\left\{\left(y,\frac{1+y}{2}\right),\left(\frac{y}{2},y\right)\right\},$$
    $$\mathcal{X}_{22}(y,y)=\tilde{\mathcal{X}}_{22}(y,y)=\left\{\left(y,\frac{1+y}{2}\right)\right\},$$
    and
    $$\mathcal{X}_{11}(y,y)=\tilde{\mathcal{X}}_{11}(y,y)=\left\{\left(\frac{y}{2},y\right)\right\}.$$
    Hence
    \begin{equation}\label{p case 4}
    \mathcal{X}_*(y,y)=\left\{\left(y,\frac{1+y}{2}\right),\left(\frac{y}{2},y\right)\right\}.
    \end{equation}
\item[\textbf{Case 5}] $(x,y)\in E_5$, i.e., $x=y=\frac{2}{3}$. From (\ref{s case 1}), (\ref{s case 2}) and (\ref{s case 3}), we have
    $$A_{12}\left(\frac{2}{3},\frac{2}{3}\right)=A_{22}\left(\frac{2}{3},\frac{2}{3}\right)=A_{11}\left(\frac{2}{3},\frac{2}{3}\right)=\frac{1}{2}.$$
     From (\ref{p prime 1}),(\ref{p prime 2}), (\ref{p prime 3}) and (\ref{p and p prime}), we know that
     $$\mathcal{X}_{12}\left(\frac{2}{3},\frac{2}{3}\right)=\tilde{\mathcal{X}}_{12}\left(\frac{2}{3},\frac{2}{3}\right)=\left\{\left(1-t,1-\frac{t}{2}\right) : \frac{1}{3}\leq t\leq\frac{2}{3}\right\},$$
      $$\mathcal{X}_{22}\left(\frac{2}{3},\frac{2}{3}\right)=\tilde{\mathcal{X}}_{22}(x,y)=\left\{\left(\frac{2}{3},\frac{5}{6}\right)\right\},$$
    and
    $$\mathcal{X}_{11}\left(\frac{2}{3},\frac{2}{3}\right)=\tilde{\mathcal{X}}_{11}\left(\frac{2}{3},\frac{2}{3}\right)=\left\{\left(\frac{1}{3},\frac{2}{3}\right)\right\}.$$
    Hence
    \begin{equation}\label{p case 5}
    \mathcal{X}_*\left(\frac{2}{3},\frac{2}{3}\right)=\left\{\left(1-t,1-\frac{t}{2}\right) : \frac{1}{3}\leq t\leq\frac{2}{3}\right\}.
    \end{equation}
\end{enumerate}
By combining (\ref{f}), (\ref{theta and s}), (\ref{p case 1}), (\ref{p case 2}), (\ref{p case 3}), (\ref{p case 4}), and (\ref{p case 5}), we immediately obtain the second part of Theorem \ref{main theorem_1}. Here, we note that for the case $(x,y)=(1,1)$, the result is trivial.

\section{Determining $\delta_{L}(x,y)$ and $\Delta(x,y)$}
Assume that $(x,y)\neq(1,1)$. Let $\bar{f}$ be the function defined in Section \ref{thetadetermining} (see (\ref{f})). Recall that $I_1$ and $I_2$ denote the intervals $[0,x]$ and $[x,1]$, respectively. For $i=1,2$,
we define
$$A^i(x,y)=\min_{x'\in I_i}|S^{\bar{f}}(x')|.$$
Let $A^*(x,y)=\min\{A^1(x,y),A^2(x,y)\}$.

In order to determine $\Delta(x,y)$, we define
$$\tilde{\mathcal{X}}^i(x,y)=\{x' : x'\in I_i,~|S^{\bar{f}}(x)|=A^i(x,y)\},$$
and
$$\mathcal{X}^i(x,y)=\{x' : x'\in I_i,~|S^{\bar{f}}(x')|=A^*(x,y)\},$$
where $i=1,2$.
Obviously, for $i=1,2$, we have
\begin{equation}\label{p star and p prime}
\mathcal{X}^i(x,y)=
\begin{cases}
\tilde{\mathcal{X}}^i(x,y) & A^i(x,y)=A^*(x,y),\\
\emptyset & otherwise.
\end{cases}
\end{equation}
Let $\mathcal{X}^*(x,y)=\mathcal{X}^1(x,y)\cup\mathcal{X}^2(x,y)$.
By Theorem \ref{deltabijection} and (\ref{lambdauplattice}), we know that
\begin{equation}\label{delta and s}
\Delta(x,y)=\{\mathcal{L}((x'-1,1),(x',\bar{f}(x'))) : x'\in\mathcal{X}^*(x,y)\}.
\end{equation}

\subsection{$A^*(x,y)$}
Suppose that $(x,y)\in D'\setminus\{(1,1)\}$. When $y=1$, we can easily get
$$A^1(x,1)=1,$$
and
$$\tilde{\mathcal{X}}^1(x,1)=I_1.$$
We now suppose that $y\neq 1$. Given any $x'\in I_1$. By elementary computations (see Figure \ref{sfup1}), we have
$$|S^{\bar{f}}(x')|=1-\frac{(1-y)(1-x')x'}{x}.$$
Obviously, it is a concave quadratic function with respect to $x'$. Therefore, $|S^{\bar{f}}(x')|$ reaches its minimum value on $I_1$ at
$$
x'=
\begin{cases}
\frac{1}{2} & x\geq \frac{1}{2},\\
x & x\leq \frac{1}{2}.
\end{cases}
$$
Hence, we obtain
\begin{equation}\label{s 1 star}
A^1(x,y)=
\begin{cases}
1-\frac{1-y}{4x} & x\geq \frac{1}{2},\\
1-(1-x)(1-y) & x\leq \frac{1}{2},
\end{cases}
\end{equation}
and
\begin{equation}\label{p star 1}
\tilde{\mathcal{X}}^1(x,y)=
\begin{cases}
I_1 & y=1,\\
\left\{\frac{1}{2}\right\} & x\geq\frac{1}{2}, y<1,\\
\left\{x\right\} & x\leq \frac{1}{2}, y<1.
\end{cases}
\end{equation}
where $(x,y)\in D'\setminus\{(1,1)\}$.
By symmetry, we immediately get
\begin{equation}\label{s 2 star}
A^2(x,y)=
\begin{cases}
1-\frac{1-x}{4y} & y\geq \frac{1}{2},\\
1-(1-x)(1-y) & y\leq \frac{1}{2},
\end{cases}
\end{equation}
and
\begin{equation}\label{p star 2}
\tilde{\mathcal{X}}^2(x,y)=
\begin{cases}
I_2 & x=1,\\
\left\{1-\frac{1-x}{2y}\right\} & y\geq\frac{1}{2}, x<1,\\
\left\{x\right\} & y\leq \frac{1}{2}, x<1.
\end{cases}
\end{equation}
where $(x,y)\in D'\setminus\{(1,1)\}$.

\begin{figure}[!ht]
  \centering
    \includegraphics[scale=1]{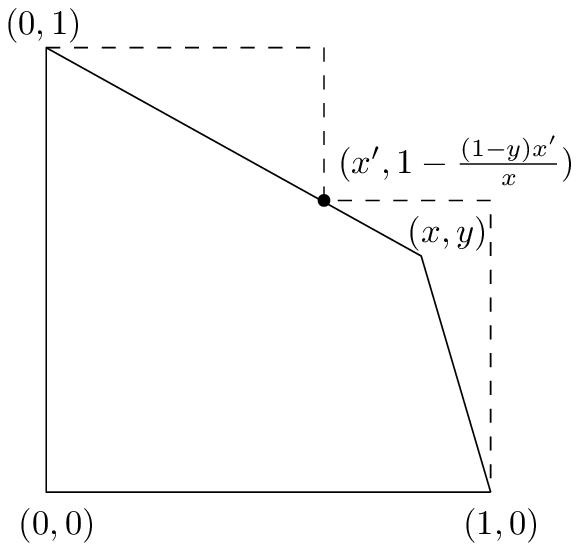}
    \caption{$S^{\bar{f}}(x'),x'\in I_1$}\label{sfup1}
\end{figure}

Now we suppose that $(x,y)\in D\setminus\{(1,1)\}$. Since $0\leq x\leq y\leq 1$ and $x+y\geq 1$, we have
$$y(1-y)\leq x(1-x),$$
and hence
\begin{equation}\label{s star inequality 1}
1-\frac{1-x}{4y}\leq 1-\frac{1-y}{4x},
\end{equation}
with equality only when $x=y$ or $x+y=1$. On the other hand, by Arithmetic Mean-Geometric Mean inequality, we know that
$$y(1-y)\leq \frac{1}{4},$$
therefore
\begin{equation}\label{s star inequality 2}
1-\frac{1-x}{4y}\leq 1-(1-x)(1-y),
\end{equation}
with equality only when $x=1$ or $y=\frac{1}{2}$. From (\ref{s 1 star}),(\ref{s 2 star}),(\ref{s star inequality 1}) and (\ref{s star inequality 2}), we have
\begin{equation}\label{s star}
\begin{split}
A^*(x,y) &=\min \{A^1(x,y),A^2(x,y)\}\\
&=\min \left\{A^1(x,y),1-\frac{1-x}{4y}\right\}\\
&=1-\frac{1-x}{4y},
\end{split}
\end{equation}
where $(x,y)\in D$.
The first part of Theorem \ref{main theorem} immediately follows from Theorem \ref{deltawitharea} and (\ref{s star}).

\subsection{$\Delta(x,y)$}

We divide $D\setminus\{(1,1)\}$ into four sets
\begin{align*}
F_1&=\{(x,y)\in D : \frac{1}{2}<x=y<1\},\\
F_2&=\{(x,y)\in D : x+y=1,y>\frac{1}{2}\},\\
F_3&=\left\{\left(\frac{1}{2},\frac{1}{2}\right)\right\},\\
F_4&=\{(x,y)\in D : x<y, 1<x+y\}.
\end{align*}

\begin{enumerate}
\item[\textbf{Case 1}] $(x,y)\in F_1$. From (\ref{s 1 star}) and (\ref{s 2 star}), since $x=y$, we have
    $$A^1(y,y)=A^2(y,y)=A^*(y,y).$$
    Therefore
    $$\mathcal{X}^1(y,y)=\tilde{\mathcal{X}}^1(y,y)=\left\{\frac{1}{2}\right\},$$
    and
    $$\mathcal{X}^2(y,y)=\tilde{\mathcal{X}}^2(y,y)=\left\{1-\frac{1-y}{2y}\right\}.$$
    Hence
    \begin{equation}\label{p star case 2}
    \mathcal{X}^*(y,y)=\left\{\frac{1}{2},\frac{3}{2}-\frac{1}{2y}\right\}.
    \end{equation}
\item[\textbf{Case 2}] $(x,y)\in F_2$, i.e., $x+y=1$ and $y<\frac{1}{2}$. From (\ref{s 1 star}), (\ref{s 2 star}) and (\ref{s star inequality 2}), we know that
$$A^*(x,y)=A^2(x,y)=1-\frac{1-x}{4y}<1-(1-x)(1-y)=A^1(x,y).$$
Therefore
$$\mathcal{X}^1(x,y)=\emptyset,$$
and
$$\mathcal{X}^2(x,y)=\tilde{\mathcal{X}}^2(x,y)=\left\{1-\frac{1-x}{2y}\right\}=\left\{\frac{1}{2}\right\}.$$
Thus
\begin{equation}\label{p star case 3}
\mathcal{X}^*(x,y)=\left\{\frac{1}{2}\right\}.
\end{equation}
\item[\textbf{Case 3}] $(x,y)\in F_3$, i.e., $x=y=\frac{1}{2}$. One can see that
$$\mathcal{X}^1\left(\frac{1}{2},\frac{1}{2}\right)=\mathcal{X}^2\left(\frac{1}{2},\frac{1}{2}\right)=\left\{\frac{1}{2}\right\}.$$
Thus
\begin{equation}\label{p star case 4}
\mathcal{X}^*\left(\frac{1}{2},\frac{1}{2}\right)=\left\{\frac{1}{2}\right\}.
\end{equation}
\item[\textbf{Case 4}] $(x,y)\in F_4$. In this case, it is clear that
$$
1-\frac{1-x}{4y}< 1-\frac{1-y}{4x},
$$
and
$$
1-\frac{1-x}{4y}< 1-(1-x)(1-y).
$$
Hence, we have
$$A^*(x,y)=A^2(x,y)=1-\frac{1-x}{4y}<A^1(x,y).$$
Thus
$$\mathcal{X}^1(x,y)=\emptyset,$$
and
$$\mathcal{X}^2(x,y)=\tilde{\mathcal{X}}^2(x,y)=\left\{1-\frac{1-x}{2y}\right\}.$$
We obtain
\begin{equation}\label{p star case 5}
\mathcal{X}^*(x,y)=\left\{1-\frac{1-x}{2y}\right\}.
\end{equation}
\end{enumerate}

By combining (\ref{f}), (\ref{delta and s}), (\ref{p star case 2}), (\ref{p star case 3}), (\ref{p star case 4}) and (\ref{p star case 5}), the first part of Theorem \ref{main theorem_1} is complete.

\section{Proof of $\delta_{L}(Q)\vartheta_{L}(Q)\geq 1$ and $\delta_{L}(Q)+\vartheta_{L}(Q)\geq 2$}
To prove $\delta_{L}(Q)\vartheta_{L}(Q)\geq 1$, it suffices to show that $\delta_{L}(x,y)\vartheta_{L}(x,y)\geq 1$, for all $(x,y)\in D$. Let $B_1$, $B_2$ and $B_3$ be the sets defined in Section \ref{Adownsubsec}. Suppose that $(x,y)\in D$.
\begin{enumerate}
\item[\textbf{Case 1}] $(x,y)\in B_1$. Since $x+y\geq 1$, we have
$$x+y+1\geq 2 \geq\frac{4}{3(1-x)},$$
and hence
$$(x+y)^2-1\geq \frac{4(x+y-1)}{3(1-x)}.$$
Therefore
$$3(1-x)((x+y)^2-1)\geq 4(x+y-1).$$
This implies that
$$3(1-x)(x+y)^2\geq 4y-(1-x),$$
thus
$$\delta_{L}(x,y)\vartheta_{L}(x,y)=\frac{3(1-x)(x+y)^2}{4y-(1-x)}\geq 1.$$
\item[\textbf{Case 2}] $(x,y)\in B_2$. Since
$$(2y-x-1)^2\geq 0,$$
we have
$$4y^2-4xy+x^2\geq 4y-2x-1.$$
Hence
\begin{equation*}
\begin{split}
4(x+y)^2 &= 4y^2+8xy+4x^2\\
&\geq 4y-2x-1+12xy+3x^2\\
&=(4y+x-1)(1+3x).
\end{split}
\end{equation*}
Therefore
$$\delta_{L}(x,y)\vartheta_{L}(x,y)=\frac{4(x+y)^2}{(4y-(1-x))(1+3x)}\geq 1.$$
\item[\textbf{Case 3}] $(x,y)\in B_3$. In this case, we have
$$\delta_{L}(x,y)\vartheta_{L}(x,y)=\frac{y(x+y)^2(4(1-x)(1-y)-xy)}{(4y-(1-x))(x(1-x)+y(1-y)-xy)}.$$
To show that $\delta_{L}(x,y)\vartheta_{L}(x,y)\geq 1$, we may define
$$\pi (x,y)=y(x+y)^2(4(1-x)(1-y)-xy)-(4y-(1-x))(x(1-x)+y(1-y)-xy).$$
By computations, we obtain
$$\pi (x,y)=(x+y-1)\pi^*(x,y),$$
where
$$\pi^*(x,y)=(3y^2-4y+1)x^2+(3y^3-5y^2+4y-1)x-(4y^3-4y^2+y).$$
We claim that $\pi^*(x,y)\geq 0$. In fact, since $\frac{1}{2}\leq y<\frac{2}{3}$, we have $3y^2-4y+1=(3y-1)(y-1)<0$. Hence, $\pi^*$ is a convex quadratic function with respect to $x$. Therefore, it suffices to show that $\pi^*(x,y)\geq 0$, when $x+y=1$ or $x=y$. This is obvious, because if $x+y=1$, then
$$\pi^*(x,y)=\pi^*(1-y,y)=-6y^3+7y^2-2y=-y(3y-2)(2y-1)\geq 0,$$
and if $x=y$, then
$$\pi^*(x,y)=\pi^*(y,y)=y(y-1)(3y-2)(2y-1)\geq 0.$$
\end{enumerate}
This completes the proof of the first part of Corollary \ref{main cor}. Moreover, by Arithmetic Mean-Geometric Mean inequality, it immediately follows that
$$\delta_{L}(Q)+\vartheta_{L}(Q)\geq 2\sqrt{\delta_{L}(Q)\vartheta_{L}(Q)}\geq 2.$$

\section{The proof of $\frac{1}{\delta_{L}(Q)}+\frac{1}{\vartheta_{L}(Q)}\geq 2$ and $\vartheta_{L}(Q)\leq 1+\frac{5}{4}\sqrt{1-\delta_{L}(Q)}$}
To prove $\frac{1}{\delta_{L}(Q)}+\frac{1}{\vartheta_{L}(Q)}\geq 2$, it suffices to show that $\frac{1}{\delta_{L}(x,y)}+\frac{1}{\vartheta_{L}(x,y)}\geq 2$, for all $(x,y)\in D$. Suppose that $(x,y)\in D$.

\begin{enumerate}
\item[\textbf{Case 1}] $(x,y)\in B_1$. We have to show that
$$\frac{4y+x-1}{2y(x+y)}+\frac{2y}{3(x+y)(1-x)}\geq 2,$$
or equivalently
$$\eta_1(x,y)=3(1-x)(4y+x-1)+(2y)^2-12y(x+y)(1-x)\geq 0.$$
By elementary computations, we can obtain
$$\eta_1(x,y)=4(3x-2)y^2+12(1-x)^2y-3(1-x)^2.$$
Since $0\leq x\leq \frac{1}{3}$, $\eta_1(x,y)$ is a convex quadratic function with respect to $y$. Hence, it suffices to show that
$\eta_1(x,y)\geq 0$, when $x+y=1$ or $y=1$. This is obvious because if $x+y=1$, then we have
$$\eta_1(x,y)=\eta_1(1-y,y)=4-3y^2\geq 1,$$
and if $x=y$, then
$$\eta_1(x,y)=\eta_1(x,x)=(3x-1)^2\geq 0.$$
\item[\textbf{Case 2}] $(x,y)\in B_2$. Note that
$$y^2(1+3x)+(4y+x-1)-4y(x+y)=(1-x)(1-y)(3y-1)\geq 0.$$
This implies that
$$\frac{y(1+3x)}{2(x+y)}+\frac{4y+x-1}{2y(x+y)}\geq 2.$$
\item[\textbf{Case 3}] $(x,y)\in B_3$. We have to show that
$$\frac{2(x-x^2+y-y^2-xy)}{(x+y)(4-4x-4y+3xy)}+\frac{4y+x-1}{2y(x+y)}\geq 2.$$
To do this, we define
\begin{align*}
\eta_3(x,y) &= 4y(x-x^2+y-y^2-xy)+(4y+x-1)(4-4x-4y+3xy)\\
&~~~-4y(x+y)(4-4x-4y+3xy)\\
&=(1-x)((12y^2-15+4)x+4(3y^3-7y^2+5y-1))
\end{align*}
Let
$$\eta_3^*(x,y)=(12y^2-15+4)x+4(3y^3-7y^2+5y-1).$$
It is clear that $\eta_3^*(x,y)$ is a linear function with respect to $x$. Hence, it suffices to show that
$\eta_3^*(x,y)\geq 0$, when $x+y=1$ or $x=y$. If $x+y=1$, then
$$\eta_3^*(x,y)=\eta_3^*(1-y,y)=y(12y+13)(1-y)\geq 0.$$
If $x=y$, then
$$\eta_3^*(x,y)=\eta_3^*(y,y)=12y^3-43y^2+36y-4.$$
By determining the second derivative, one can see that $\kappa(y)=12y^3-43y^2+36y-4$ is a convex function on $[\frac{1}{2},\frac{2}{3}]$. Obviously,
$$\kappa\left(\frac{1}{2}\right)=\frac{19}{4}$$
and
$$\kappa\left(\frac{2}{3}\right)=\frac{40}{9}.$$
Therefore $\eta_3^*(y,y)=\kappa(y)\geq \frac{40}{9}>0$.
\end{enumerate}
In conclusion, we have
\begin{equation}\label{harmonic inequality}
\frac{1}{\delta_{L}(x,y)}+\frac{1}{\vartheta_{L}(x,y)}\geq 2,
\end{equation}
for all $(x,y)\in D$.

This completes the proof of the second part of Corollary \ref{main cor}. Furthermore, by elementary computations, one can show that when $\frac{2}{3}\leq u\leq 1\leq v\leq \frac{3}{2}$ and $\frac{1}{u}+\frac{1}{v}\geq 2$, we have
$$v\leq 1+\frac{5}{4}\sqrt{1-u}.$$
It is known \cite{fary} that
$$\frac{2}{3}\leq \delta_{L}(K)\leq 1\leq \vartheta_{L}(K)\leq \frac{3}{2},$$
for every convex disk $K$. Hence
$$\frac{2}{3}\leq \delta_{L}(x,y)\leq 1\leq \vartheta_{L}(x,y)\leq \frac{3}{2},$$
for all $(x,y)\in D$.
Therefore, by (\ref{harmonic inequality}), it immediately follows that
$$\vartheta_{L}(x,y)\leq 1+\frac{5}{4}\sqrt{1-\delta_{L}(x,y)},$$
for all $(x,y)\in D$.
\begin{figure}[!ht]
  \centering
    \includegraphics[scale=0.5]{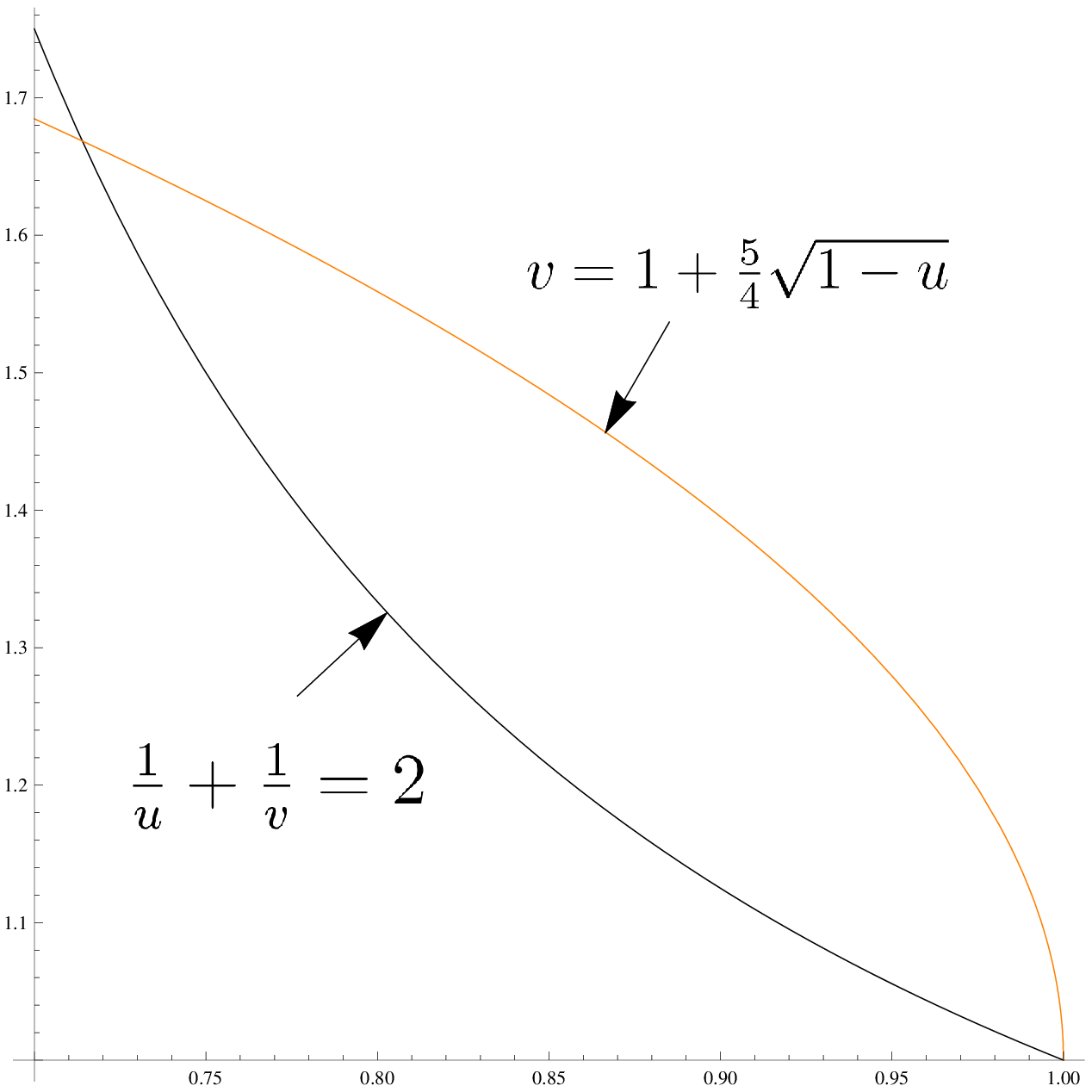}
    \caption{the curves $\frac{1}{u}+\frac{1}{v}=2$ and $v=1+\frac{5}{4}\sqrt{1-u}$}
\end{figure}

\begin{rem}
Let $\mathcal{Q}$ denote the collection of all convex quadrilaterals. We recall that $\omega_{L}$ is defined by $\omega_{L}(K)=(\delta_{L}(K),\vartheta_{L}(K))$ for every $K\in\mathcal{K}^2$. We can determine $\omega_{L}(\mathcal{Q})$. To do this, we divide $\mathcal{Q}$ into three sets
$$\mathcal{Q}_i=\{Q : Q \text{~is affinely equivalent to~}K_{x,y} \text{~for some~}(x,y)\in B_i \},$$
and let $\Omega_i=\omega_{L}(\mathcal{Q}_i)$, where $i=1,2,3$. By using computer calculations, we can obtain $\Omega_1$, $\Omega_2$ and $\Omega_3$ , as shown in Figure \ref{omega1}, Figure \ref{omega2} and Figure \ref{omega3}.
\begin{figure}[!ht]
  \centering
    \includegraphics[scale=0.65]{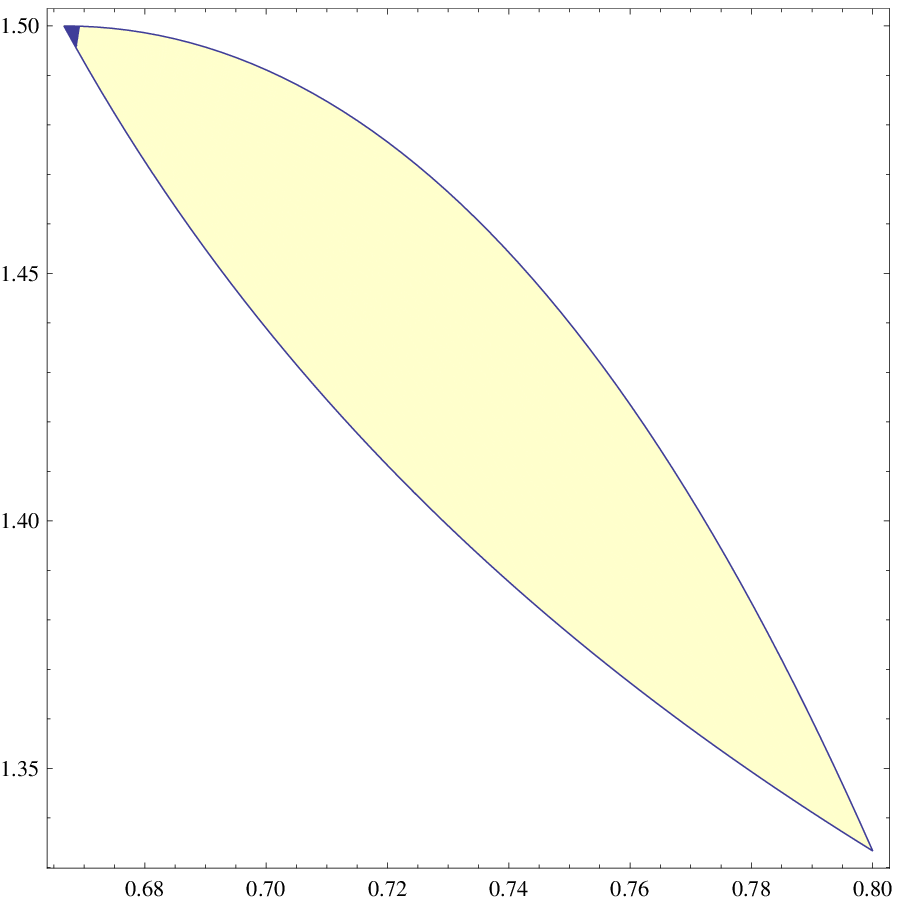}
    \caption{$\Omega_1$}
    \label{omega1}
\end{figure}

\begin{figure}[!ht]
  \centering
    \includegraphics[scale=0.65]{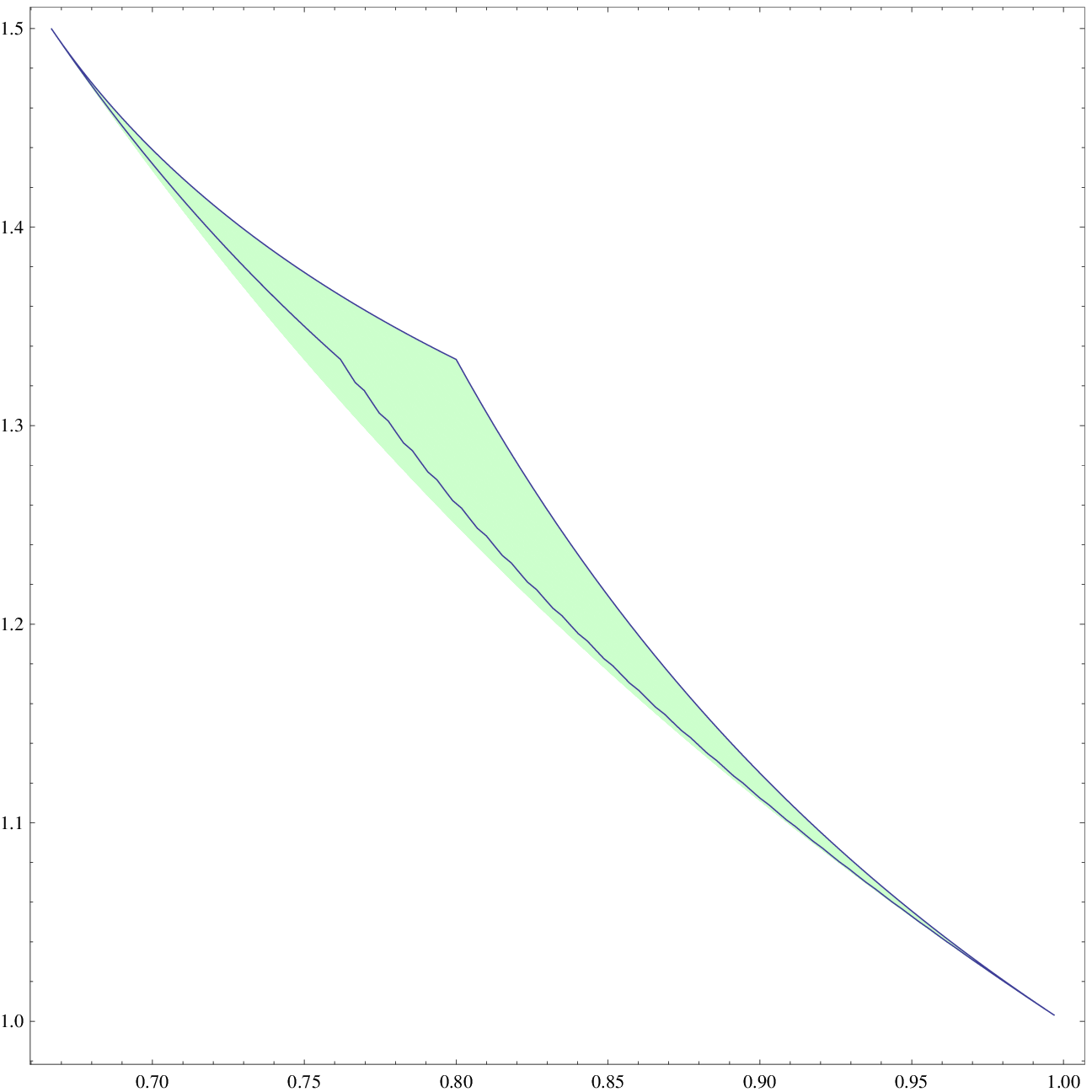}
    \caption{$\Omega_2$}
    \label{omega2}
\end{figure}

\begin{figure}[!ht]
  \centering
    \includegraphics[scale=0.65]{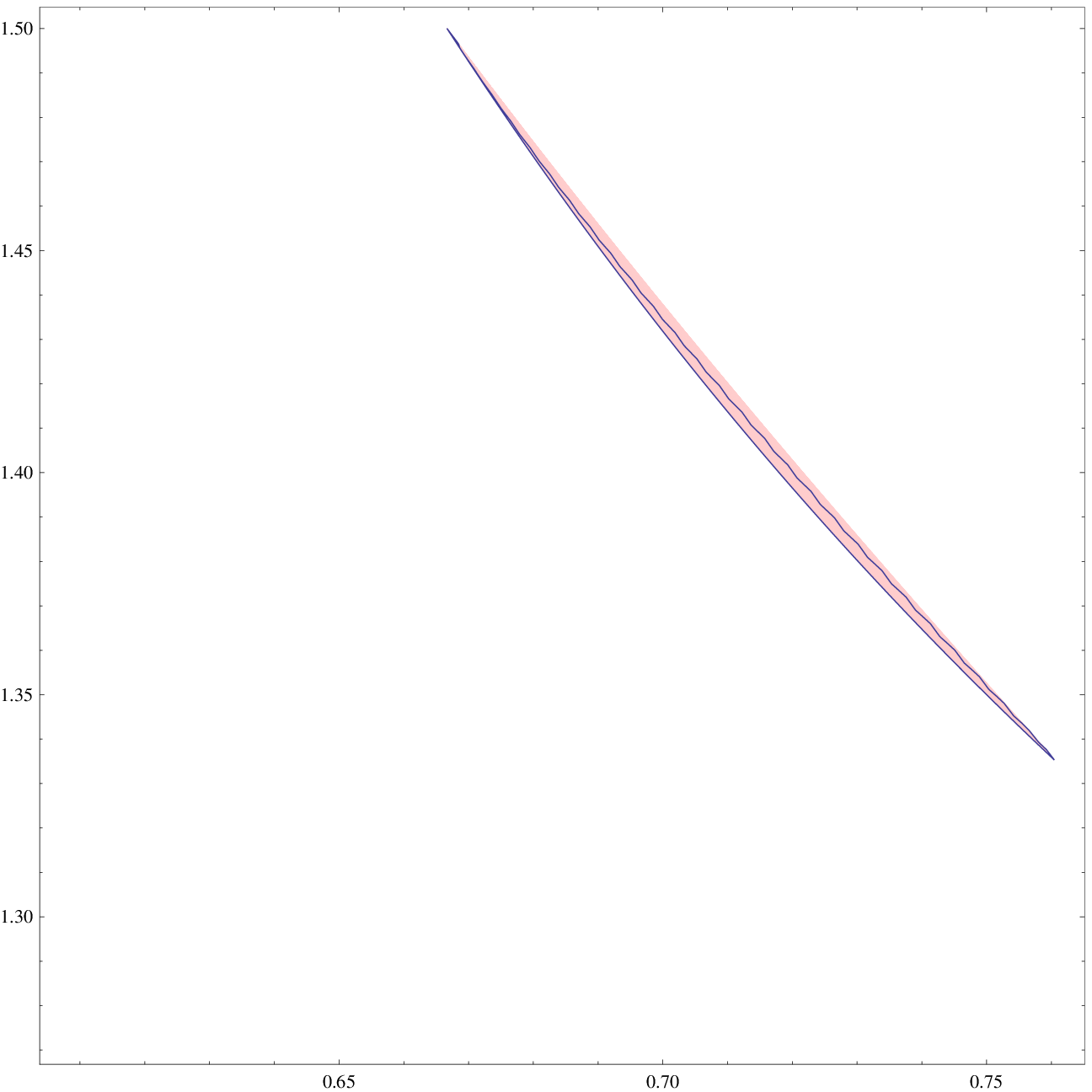}
    \caption{$\Omega_3$}
    \label{omega3}
\end{figure}
Corollary \ref{main cor} can be expressed as : $\omega_{L}(\mathcal{Q})=\Omega_1\cup\Omega_2\cup\Omega_3$ lies between the curves $uv=1$ and $\frac{1}{u}+\frac{1}{v}=2$, as shown in Figure \ref{omega123}.

\begin{figure}[!ht]
  \centering
    \includegraphics[scale=0.5]{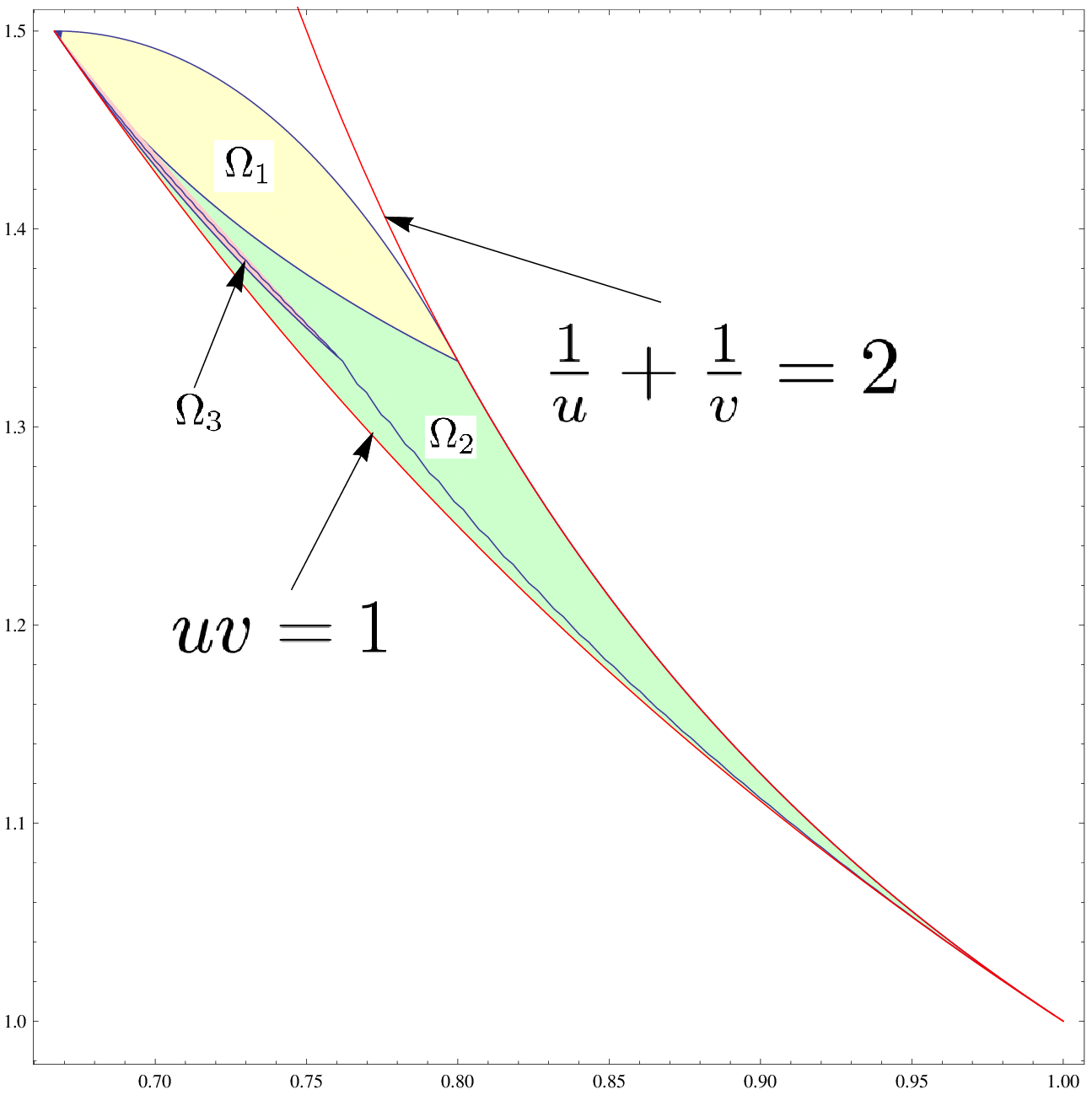}
    \caption{$\omega_{L}(\mathcal{Q})$}
    \label{omega123}
\end{figure}

\end{rem}
\begin{rem}
Let $K_f$ be the convex disk defined in Section \ref{intro}. We note that there exist convex disks $K_f$ such that the inequality $\frac{1}{\delta_{L}(K_f)}+\frac{1}{\vartheta_{L}(K_f)}\geq 2$ dose not hold. For example, we can let $f(x)=1-x^3$. By Theorem \ref{deltawitharea} and Theorem \ref{transequallattice_witharea}, one can get
$$\delta_{L}(K_f)=0.8384279476\ldots$$
and
$$\vartheta_{L}(K_f)=1.282632608\ldots$$
and hence
$$\frac{1}{\delta_{L}(K_f)}+\frac{1}{\vartheta_{L}(K_f)}=1.972354815\ldots < 2$$
\end{rem}

\section{Problems}
Let $K_f$ be the convex disk defined in Section \ref{intro}. We would like to conclude with two questions

\medskip
\noindent
\textbf{Problem 1} Dose the inequality
$$\delta_{L}(K_f)\vartheta_{L}(K_f)\geq 1$$
hold for every convex disk $K_f$?

\medskip
\noindent
\textbf{Problem 2} Is it true that
$$\delta_{L}(K_f)+\vartheta_{L}(K_f)\geq 2$$
and
$$\vartheta_{L}(K_f)\leq 1+\frac{5}{4}\sqrt{1-\delta_{L}(K_f)}$$
for every convex disk $K_f$?

\section*{Acknowledgement}
I am very much indebted to Professor ChuanMing Zong for his valuable suggestions and many interesting discussions on the topic. This work is supported by 973 programs 2013CB834201 and 2011CB302401.

%%%%%%%%%%%%%%%%%%%%%%%%%%%
\newpage


\begin{thebibliography}{10}

\bibitem{rogers}
Rogers, C.~A.:  Packing and Covering. Cambridge university press, pp. 16--20.
(1964)
\bibitem{rogers_2}
Rogers, C.~A.: The closest packing of convex two-dimensional domains. Acta Math. \textbf{86}, 309--321 (1951)

\bibitem{fary}
F\'{a}ry, I.: Sur la densit\'{e} des r\'{e}seaux de domaines convexes. Bulletin de la S. M. F. tome 78, 152--161 (1950)

\bibitem{fejes}
Fejes T\'{o}th, L.: Regular Figures. Pergamon. New York (1964)

\bibitem{fejes_2}
Fejes T\'{o}th, L.: Some packing and covering theorems. Acta Math. Acad. Sci. Hungar. \textbf{12}, 62--67 (1950)

\bibitem{dowker}
Dowker, C.H.: On minimum circumscribed polygons. Bull. Am. Math. Soc. \textbf{50}, 120--122 (1944)

\bibitem{ismailescu}
Ismailescu, D.: Inequalities between lattice packing and covering densities of centrally symmetric
plane convex bodies. Discrete Comput. Geom.\textbf{25}, 365--388 (2001)

\bibitem{ismailescu kim}
Ismailescu, D., Kim, B.: Packing and Covering with Centrally Symmetric Convex Disks. Discrete Comput. Geom.\textbf{51}, 495--508 (2014)

\bibitem{kuperberg}
Kuperberg, W.: The set of packing and covering densities of convex disks. Discrete Comput. Geom.\textbf{50}, 1072--1084 (2013)

\bibitem{xuefei}
Sriamorn, K., Xue, F.: On the Covering Densities of Quarter-Convex Disks, arXiv:1411.4409 (2014)

\end{thebibliography}
\end{document}